\newtheorem{theorem}{Theorem}[section]
\newtheorem{lemma}[theorem]{Lemma}
\theoremstyle{definition}
\renewcommand{\geq}{\geqslant}
\renewcommand{\leq}{\leqslant}
\begin{document}
%
%\begin{frontmatter}
\title{
The existence of  square non-integer Heffter arrays
}

\author{
Nicholas J. Cavenagh\thanks{Department of Mathematics, The University of Waikato, Private Bag 3105, Hamilton 3240, New Zealand.}
\\
\texttt{nickc@waikato.ac.nz}
\and
Jeff Dinitz\thanks{Department of Mathematics and Statistics, University of Vermont, Burlington, VT 05405, USA}
\\
\texttt{jeff.dinitz@uvm.edu}
\and
Diane Donovan\thanks{School of Mathematics and Physics, The University of Queensland,  Queensland 4072, Australia.}
\\
\texttt{dmd@maths.uq.edu.au}
\and
\c{S}ule Yaz\i c\i\thanks{Department of Mathematics, Ko\c{c} University, Istanbul, Turkey}
\\
\texttt{eyazici@ku.edu.tr}
}

\maketitle

\begin{abstract}
A Heffter array $H(n;k)$ is an $n\times n$ matrix such that each row and column contains $k$ filled cells,
each row and column sum is divisible by $2nk+1$ and either $x$ or $-x$ appears in the array for each integer $1\leq x\leq nk$.
Heffter arrays are useful for embedding the graph $K_{2nk+1}$ on an orientable surface.
An integer Heffter array is one in which each row and column sum is $0$. 
Necessary and sufficient conditions (on $n$ and $k$) for the existence of an integer Heffter array $H(n;k)$ were verified by Archdeacon, Dinitz, Donovan and Yaz\i c\i  \ (2015) and Dinitz and Wanless (2017).
In this paper we consider square Heffter arrays that are not necessarily integer. We show that such 
Heffter arrays exist whenever $3\leq k<n$.
\end{abstract}

\textbf{Keywords:} Heffter  arrays

\section{Introduction}

A Heffter array $H(m,n;s,t)$ is an $m\times n$ matrix of integers such that:
\begin{itemize}
\item each row contains $s$ filled cells and each column contains $t$ filled cells;
\item the elements in every row and column sum to $0$ in ${\mathbb Z}_{2ms+1}$; and
\item for each integer $1\leq x\leq ms$, either $x$ or $-x$ appears in the array.
\end{itemize}

 If the
Heffter array is square, then $m = n$ and necessarily $s = t$. We denote such Heffter arrays by 
 $H(n; k)$, where each  row and each column contains $k$ filled cells.
A Heffter array is called an {\em integer} Heffter array if Condition 2 in the definition of a Heffter
array above is strengthened so that the elements in every row and every column sum to zero
in ${\mathbb Z}$.

 Archdeacon, in \cite{A}, was the first to define and study a Heffter array $H(m, n; s, t)$. He
showed that a Heffter array with a pair of special orderings can be used to construct
an embedding of the complete graph $K_{2ms+1}$ on a surface. This connection is formalised in the
following theorem. For definitions of simple and compatible orderings refer to \cite{A}.

\begin{theorem}\label{Archdeacon} {\rm \cite{A}} Given a Heffter array $H(m, n; s, t)$ with compatible orderings $\omega_r$ of the
symbols in the rows of the array and $\omega_c$ on the symbols in the columns of the array, then
there exists an embedding of $K_{2ms+1}$ such that every edge is on a face of size $s$ and a face of
size $t$. Moreover, if $\omega_r$ and $\omega_c$ are both simple, then all faces are simple cycles.
\end{theorem}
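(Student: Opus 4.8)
The plan is to construct the embedding from a rotation system and to invoke the Heffter--Edmonds principle: any assignment of a cyclic rotation of the incident darts at each vertex determines a unique cellular embedding of the graph in a closed orientable surface, whose faces are the orbits of the face-tracing permutation. First I would identify the vertex set of $K_{2ms+1}$ with $\mathbb{Z}_N$, $N=2ms+1$, and record each edge by its two darts: write $(v,d)$ for the dart from $v$ to $v+d$, of \emph{signed length} $d\in\{\pm1,\dots,\pm ms\}$, with reverse $\tau(v,d)=(v+d,-d)$. Since the array contains exactly one of $x$ and $-x$ for each $1\le x\le ms$, the set $S$ of array entries and its negation $-S$ partition the $2ms$ signed lengths, so each edge carries exactly one dart of signed length in $S$ (a \emph{row dart}) and one in $-S$ (a \emph{column dart}). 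The translation action of $\mathbb{Z}_N$ is free and orientation preserving, so I would look for a translation-invariant rotation system; such a system is captured by a single cyclic permutation $\pi$ of the $2ms$ directions, the rotation being $\rho_v(v,d)=(v,\pi(d))$ at every vertex $v$.

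Next I would read the prospective faces off the array. For a row with $\omega_r$-cyclic order $a_1,\dots,a_s$, the zero-sum condition makes the walk $0,a_1,a_1+a_2,\dots,0$ close after $s$ steps, and its $\mathbb{Z}_N$-translates traverse the darts of signed lengths $a_1,\dots,a_s$ exactly once each; ranging over all rows, these directed closed $s$-walks cover the row darts exactly once. Dually, reading each column in its $\omega_c$-cyclic order $b_1,\dots,b_t$ and forming the walks $0,-b_1,-b_1-b_2,\dots,0$ (closed, since the $b_j$ sum to $0$) covers the column darts exactly once by directed closed $t$-walks. As the row and column darts together exhaust all darts, these walks assemble into a single permutation $\Phi$ of the darts whose orbits are the $s$-walks and the $t$-walks, and each edge lies on one $s$-walk and one $t$-walk because its two darts fall into different classes.

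It remains to check that $\Phi$ is the face permutation of an honest rotation. Setting $\rho=\Phi\tau$ gives $\rho\tau=\Phi$, and a direct check shows $\rho(v,d)=\Phi(v+d,-d)$ is again a dart out of $v$, so $\rho$ stabilizes each vertex star; by translation invariance it is governed by the single permutation $\pi$ with $\pi(-a)=(\text{the }\omega_r\text{-successor of }a)$ for $a\in S$ and $\pi(b)=-(\text{the }\omega_c\text{-successor of }b)$ for $b\in S$. The first rule maps $-S$ bijectively onto $S$ and the second maps $S$ onto $-S$, so $\pi$ is a permutation of all $2ms$ directions, and the orderings are \emph{compatible} exactly when $\pi$ is a single $2ms$-cycle. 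Granting this, $\pi$ is a legitimate rotation, $(\rho,\tau)$ is a rotation system, and Heffter--Edmonds produces an orientable embedding of $K_{2ms+1}$ whose faces are precisely the row $s$-walks and the column $t$-walks; hence every edge bounds a face of size $s$ and one of size $t$. The crux---and the step I expect to be hardest---is this last point: unwinding the definition of compatibility into the transitivity of $\pi$, while confirming en route that $\rho=\Phi\tau$ really does fix each vertex star. For the final assertion, the orderings are \emph{simple} exactly when the partial sums within each row and column are distinct, which is equivalent to every $s$-walk and $t$-walk being a simple cycle, so under that hypothesis all faces are bounded by simple cycles.
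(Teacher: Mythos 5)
This theorem is not proved in the paper at all: it is quoted from Archdeacon \cite{A}, and the paper explicitly defers even the definitions of simple and compatible orderings to that reference, so there is no internal proof to compare your attempt against. Judged on its own, your proposal reconstructs essentially Archdeacon's original argument: identify the vertices of $K_{2ms+1}$ with $\Z_{2ms+1}$, note that the array entries $S$ and their negatives $-S$ split the darts of each edge into a row dart and a column dart, let the zero-sum rows and columns generate translation-invariant closed $s$-walks and $t$-walks covering the row darts and column darts exactly once, and recover the rotation $\pi$ by composing the face-tracing permutation with dart reversal, so that these walks become the faces of the resulting orientable embedding. Your reductions (compatibility $\Leftrightarrow$ $\pi$ is a single $2ms$-cycle; simplicity $\Leftrightarrow$ distinct partial sums $\Leftrightarrow$ the face walks are simple cycles) are the correct skeleton. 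The one step you leave open, flagged by ``granting this,'' is the bridge to the actual definition in \cite{A}: there, compatibility is \emph{defined} to mean that the composition $\omega_r\circ\omega_c$ is a single cycle on the $ms$ filled cells. The missing verification is short: your $\pi$ interchanges $S$ and $-S$, and $\pi^2$ restricted to $S$ acts as $\omega_r\circ\omega_c$ (row-successor of the column-successor), so $\pi$ is a single $2ms$-cycle precisely when $\omega_r\circ\omega_c$ is a single $ms$-cycle. With that observation inserted, your proof is complete and coincides with the cited one.
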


The embedding of $K_{2ms+1}$ given in Theorem \ref{Archdeacon} provides a connection with the embedding of
cycle systems. A $t$-cycle system on $n$ points is
a decomposition of the edges of $K_n$ into $t$-cycles. A $t$-cycle system $C$ on $K_n$ is cyclic if
there is a labeling of the vertex set of $K_n$ with the elements of $\Bbb{Z}_n$ such that the permutation
$x \rightarrow  x + 1$ preserves the cycles of $C$. A biembedding of an $s$-cycle system and a $t$-cycle
system is a face 2-colorable topological embedding of the complete graph $K_{2ms+1}$ in which
one color class is comprised of the cycles in the $s$-cycle system and the other class contains
the cycles in the $t$-cycle system, see for instance \cite{BDF,CMPP,DM,GG,GM,M,V} for further details.

 %The following proposition provides the connection between cycle systems and
%Theorem \ref{Archdeacon}.

%\begin{proposition} Assume there exists a Heffter array $H(m, n; s, t)$ with compatible orderings
%$\omega_r$ of the symbols in the rows of the array and $\omega_c$ on the symbols in the columns of the array
%such that $\omega_r$ and $\omega_c$ are both simple. Then there exists a biembedding of a cyclic $s$-cycle
%system $S$ and a cyclic $t$-cycle system $T$ both on $2ms + 1$ points.
%\end{proposition}

A number of papers have appeared on the construction of Heffter arrays, $H(m,n;s,t)$. The case where the array contained no empty cells was studied in \cite{ABD}, with results summarised in Theorem \ref{Heffter}.

\begin{theorem}\label{Heffter} {\rm \cite{ABD}} There is an $H(m, n; n,m)$  for all $m, n \geq 3$ and an integer Heffter array $H(m, n; n,m)$ exists if and only if $m, n \geq 3$ and $mn \equiv 0, 3$
$(mod$ $4)$.
\end{theorem}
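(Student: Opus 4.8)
The plan is to separate the claim into a short necessity argument and a longer explicit construction, and to organise the construction by the parities of $m$ and $n$. For necessity, note first that in an integer array the sum of all the (zero) row sums is $0$, and this grand total also equals $\sum_{x=1}^{mn}\varepsilon_x x$, where $\varepsilon_x\in\{+1,-1\}$ records the sign chosen for the symbol $x$. Hence the plus-signed and the minus-signed absolute values must each total $\tfrac14 mn(mn+1)$, which is an integer exactly when $4\mid mn(mn+1)$, i.e. when $mn\equiv 0,3\pmod 4$. Equivalently, and more usefully for the construction, the parity of any row or column sum equals $\sum x\pmod 2$ over the entries of that line and is therefore fixed by the placement of the odd-valued symbols alone; an integer array needs every such parity to be even, and the number of odd symbols in $\{1,\dots,mn\}$ is even precisely when $mn\equiv 0,3\pmod 4$. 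The bound $m,n\ge 3$ is forced in both the integer and the non-integer setting, since a line with one filled cell holds a single value $\pm a\not\equiv 0$, while a line with two distinct entries gives $\pm a\pm b$ with $0<a+b<2mn+1$ and $0<|a-b|<2mn+1$, so neither length can be a multiple of $2mn+1$.

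This parity observation organises the whole construction: to realise prescribed line sums I must first place the odd-valued symbols so that each line receives an odd-symbol count of the correct parity, and only then choose signs on each line to hit its target ($0$ on an even line, $\pm(2mn+1)$ on an odd line). The sign step is essentially free: once a line's symbols are fixed, varying their signs realises every integer of the correct parity in a symmetric interval about $0$ (provided no symbol dominates the others), and for $k\ge 3$ filled cells one can keep the targets $0$ and $\pm(2mn+1)$ well inside this interval. The genuine work is thus the placement step.

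For the integer cases I would build the arrays explicitly. When both $m$ and $n$ are even (so automatically $mn\equiv 0\pmod 4$) I aim for \emph{shiftable} arrays, in which each row and each column carries equally many positive and negative entries. Shiftability is the key structural gain: adding a constant to every absolute value then leaves all line sums unchanged, so from one small shiftable zero-sum block an inflation lemma that stacks and juxtaposes relabelled copies (their absolute values shifted onto disjoint intervals) produces all even $m,n$ at once. The remaining admissible integer cases — both $m,n$ odd with $mn\equiv 3\pmod 4$, and the mixed-parity cases in which the even dimension is divisible by $4$ — cannot be fully shiftable, and I would handle these by direct constructions that lay the symbols $1,\dots,mn$ along broken diagonals and fix signs by an alternating rule engineered so that each diagonal contributes a cancelling amount to every row and column it meets, feeding a few hand-built small arrays into the even-direction inflation.

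Finally, for a non-integer array with $mn\equiv 1,2\pmod 4$ exact zero sums are impossible, but the odd-symbol count is now odd (when $mn\equiv 0,3$ the integer array already serves as the required $H(m,n;n,m)$). I would therefore place the odd symbols so that exactly one row and exactly one column carry an odd parity and all other lines are even; choosing signs then yields $0$ on every line except that distinguished row and column, whose sums I set to $\pm(2mn+1)$, while arranging the placement so that those two lines contain large enough symbols to reach magnitude $2mn+1$. I expect the main obstacle to be the placement step in the non-shiftable cases: arranging a single diagonal or sign rule that cancels simultaneously in the row and the column directions, and confirming that the handful of small exceptional arrays really exist, is where essentially all of the casework lives.
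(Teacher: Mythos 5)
First, a point of orientation: Theorem \ref{Heffter} is quoted from \cite{ABD} and is not proved in this paper, so there is no internal proof to compare your attempt against; it has to be judged as a reconstruction of the argument in \cite{ABD}. Your necessity direction is correct and is the standard one: the grand total $\sum_x \varepsilon_x x = 0$ forces each sign class to sum to $mn(mn+1)/4$, whence $mn\equiv 0,3$ (mod $4$); equivalently, every line must contain an even number of odd symbols while $\lceil mn/2\rceil$ is even exactly in those residues. The estimates $0<a+b<2mn+1$ and $0<|a-b|<2mn+1$ correctly rule out lines of length $1$ or $2$ even modulo $2mn+1$, so $m,n\geq 3$ is forced in both settings.

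The existence half, however, contains a genuine gap, and indeed rests on a step that fails as stated. You claim the sign step is ``essentially free'' because, once a line's symbols are placed, varying their signs realises every integer of the correct parity in a symmetric interval about $0$. That is true for one line in isolation, but each cell lies in exactly one row \emph{and} one column, so its sign cannot be chosen line by line: a single sign pattern must satisfy all $m+n$ sum constraints simultaneously, and this coupling is precisely where the difficulty of the theorem lives. The devices you gesture at --- shiftable blocks with balanced signs in every row and column, an inflation lemma stacking relabelled copies on disjoint support intervals, diagonal constructions with alternating signs for the non-shiftable parities, and a handful of small seed arrays --- are indeed the mechanisms used in \cite{ABD}, but none of them is actually exhibited or verified here: there are no base blocks, no checked line sums, no treatment of the odd-by-odd or mixed-parity integer cases, and no construction at all for the non-integer cases $mn\equiv 1,2$ (mod $4$) beyond a parity bookkeeping of where the nonzero line sums must sit. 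As written, the proposal proves only the ``only if'' direction; the entire existence claim remains a plan rather than a proof.
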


The papers \cite{ADDY,DW} focused on square integer Heffter arrays $H(n;k)$ and verified their existence for all admissible orders. This result is summarized
 in the following theorem.

\begin{theorem}\label{intergerHeffter}{\rm \cite{ADDY,DW}}
There exists an integer $H(n; k)$ if and only if $3 \leq k \leq  n$ and $nk \equiv
0, 3$ $(mod$ $4)$.
\end{theorem}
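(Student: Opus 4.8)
The plan is to treat the two directions separately, with necessity being routine and sufficiency carrying essentially all of the difficulty.

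For necessity I would verify the three stated constraints in turn. The bound $k \le n$ is forced because a row of length $n$ cannot contain $k > n$ filled cells. To see $k \ge 3$, note that a row with a single filled cell would need that entry to equal $0$, which is excluded, and a row with exactly two filled cells would force them to be $x$ and $-x$ for some $x$, so that both $x$ and $-x$ appear in the array, contradicting the requirement that only one of each $\pm x$ is used. For the modular condition, observe that summing all row sums shows the total of all entries is $0$. Writing each entry as $\varepsilon_x\, x$ with $\varepsilon_x \in \{+1,-1\}$ and $x$ ranging over $1,\dots,nk$, the positive and negative entries then have equal total magnitude, namely $\tfrac12 \sum_{x=1}^{nk} x = \tfrac{nk(nk+1)}{4}$. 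Since this must be an integer, $nk(nk+1) \equiv 0 \pmod 4$, which (as $nk$ and $nk+1$ are consecutive) holds exactly when $nk \equiv 0, 3 \pmod 4$.

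For sufficiency I would first fix the \emph{shape} of the array and only then worry about the values. A convenient choice is to place the filled cells on $k$ consecutive cyclic diagonals, i.e. to fill cell $(i,j)$ precisely when $j - i \bmod n \in \{0,1,\dots,k-1\}$; this guarantees exactly $k$ filled cells in every row and in every column regardless of $n$ and $k$. The problem then reduces to assigning the symbols $\{\pm 1,\dots,\pm nk\}$ to these cells so that every row and every column sums to $0$ in $\mathbb{Z}$, each absolute value being used once. I would organise the assignment by reasoning along diagonals, distributing the magnitudes in arithmetic-progression patterns and choosing signs so that the diagonal contributions cancel consistently across rows and columns at the same time. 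To cover all admissible pairs $(n,k)$ I expect to split into cases according to the residues of $k$ (and, where needed, $n$) modulo $4$, since the zero-sum blocks available for rows and columns differ in structure with parity: when $k \equiv 0 \pmod 4$ one can build zero-sum $k$-tuples by combining quadruples of the form $\{a,b,-c,-(a+b-c)\}$, whereas the cases $k \equiv 3 \pmod 4$ require an odd zero-sum core such as $\{x,y,-(x+y)\}$ together with added balanced blocks. I would establish explicit direct constructions for the smallest values of $k$ as base cases, invoke Theorem \ref{Heffter} for the full square $k=n$, and then use recursive/product constructions that glue a known integer $H(n';k)$ to a suitable block array to increase $n$ while preserving both the diagonal shape and the global use of each symbol exactly once.

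The main obstacle will be the simultaneous satisfaction of the row-sum and column-sum conditions under the single global constraint that each of $1,\dots,nk$ is used exactly once: a value placed to balance a row also participates in balancing a column, so the two partitions of the symbol set into zero-sum $k$-blocks must be mutually compatible rather than independently chosen. Managing this interaction uniformly across all residue classes, and arranging the recursive building blocks so that they remain shiftable and sign-consistent when combined, is where the bulk of the casework and the genuine difficulty of the theorem lie.
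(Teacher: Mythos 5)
The paper does not prove Theorem \ref{intergerHeffter}; it is quoted from \cite{ADDY} and \cite{DW}, so there is no in-paper argument to compare yours against. Judged on its own terms, your necessity direction is complete and correct: $k\le n$ is forced by the row length; $k\ge 3$ holds because a single filled cell would have to contain $0$, and two filled cells in a row summing to $0$ in $\mathbb{Z}$ would be $x$ and $-x$, so the absolute value $x$ would be used twice; and the congruence follows because the positive entries must total $\tfrac{nk(nk+1)}{4}$, which is an integer exactly when $nk\equiv 0,3\pmod 4$.

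The sufficiency direction, however, is only a plan, and that is where essentially all of the content of the theorem lives. You fix a shape ($k$ cyclically consecutive diagonals), announce a case split on the residues of $n$ and $k$ modulo $4$, and state that you would produce base constructions and recursive extensions, but no filling is actually exhibited for any pair $(n,k)$ with $k<n$, no base case is written down, and the step that ``glues a known integer $H(n';k)$ to a suitable block array to increase $n$'' is never defined. The obstacle you correctly identify in your closing paragraph --- that the row partition and the column partition of $\{1,\dots,nk\}$ into zero-sum $k$-blocks must be simultaneously realizable by one placement --- is precisely the hard part, and your proposal does not resolve it. The known existence proof occupies the two papers \cite{ADDY} and \cite{DW}: explicit diagonal constructions for base values of $k$ with all supports and row/column sums verified, together with shiftable extensions in the spirit of Lemma \ref{hamilton} that add four entries per row and column at a time. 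An outline of that shape, without the explicit arrays and the verifications, is not a proof; as written, only the ``only if'' half of the theorem is established.
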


\begin{table}[h]\label{existence}
 \begin{center}
\begin{tabular} {|c||c|c|c|c|} \hline
$n\backslash k$&          0&                   1&          2&3            \\ \hline\hline
0              &\cite{ADDY}&\cite{ADDY,DW}&\cite{ADDY}&\cite{ADDY} \\ \hline
1              &\cite{ADDY}&                 DNE&        DNE&\cite{ADDY} \\ \hline
2              &\cite{ADDY}&                 DNE&\cite{ADDY}&         DNE \\ \hline
3              &\cite{ADDY}&\cite{ADDY,DW}&        DNE&          DNE \\ \hline
\end{tabular}
\end{center}
\caption{ Existence results for square integer Heffter arrays $H(n;k)$}
\end{table}

Table \ref{existence} lists the possible cases and cites the article which verifies existence of square integer Heffter arrays, where DNE represents  a value that does not exist. In these cases we will verify existence for the non-integer Heffter arrays $H(n;k)$.
The main result of this paper is the following. 
\begin{theorem}
There exists an $H(n; k)$ if and only if $3 \leq k \leq  n$. 
\label{mainthm}
\end{theorem}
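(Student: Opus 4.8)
The plan is to separate the two directions and, for sufficiency, to isolate the genuinely new cases. For necessity, the bound $k \le n$ is immediate, since each row carries $k$ filled cells among only $n$ columns. For $k \ge 3$ I would argue by size: the $k$ entries of a row have distinct magnitudes in $\{1,\dots,nk\}$, so when $k \in \{1,2\}$ any row sum satisfies $|{\rm sum}| \le nk + (nk-1) < 2nk+1$, whence $0$ is the only attainable multiple of $2nk+1$. For $k=1$ this forces a zero entry (impossible), and for $k=2$ it forces the two entries to be $x$ and $-x$, i.e. the same magnitude twice, contradicting that each magnitude occurs exactly once. Hence $k \ge 3$ is necessary.

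For sufficiency I would first clear the cases already settled by earlier work. When $k=n$, Theorem~\ref{Heffter} supplies $H(n,n;n,n)=H(n;n)$ for every $n \ge 3$; and when $3 \le k < n$ with $nk \equiv 0,3 \pmod 4$, Theorem~\ref{intergerHeffter} supplies an integer $H(n;k)$, which is in particular a Heffter array. This leaves precisely the cases $3 \le k < n$ with $nk \equiv 1,2 \pmod 4$. I would record at once why these cannot be integer, since this dictates the construction target: writing $T = 1+2+\cdots+nk = nk(nk+1)/2$ for the sum of all magnitudes, the signed total $\Sigma$ of all entries satisfies $\Sigma \equiv T \pmod 2$, and a short check shows $T$ is odd exactly when $nk \equiv 1,2 \pmod 4$. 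Since $\Sigma$ is the sum of the row sums, each a multiple of $2nk+1$, it must be a nonzero odd multiple of $2nk+1$; so no integer array can exist and the relaxation must genuinely be used.

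The target I would aim for is an array whose row and column sums all lie in $\{0,\pm(2nk+1)\}$, arranged so that the signed total equals a single $\pm(2nk+1)$ — for instance with exactly one row and one column of nonzero sum, all other lines cancelling to $0$, which is automatically consistent since the nonzero row sum and nonzero column sum both equal $\Sigma$. To realise this I would fix the support to be $k$ cyclically consecutive diagonals of the $n\times n$ grid, so that the ``$k$ filled cells per row and per column'' condition holds for free, and then distribute the signed magnitudes $\pm 1,\dots,\pm nk$ along these diagonals via a base pattern developed cyclically, chosen so that most rows and columns telescope to $0$ while the prescribed defect of $2nk+1$ is deposited in one controlled position. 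The argument then splits according to $k \bmod 4$ and the parity of $n$, with direct filling-sequence constructions for the generic ranges and a short recursive step (gluing a block built from a smaller admissible array onto a base array) to reach all $n$ from finitely many base orders.

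I expect the main obstacle to be the simultaneous control of \emph{both} the row sums and the column sums while using each magnitude exactly once. It is easy to make all row sums vanish or all column sums vanish, but forcing both to vanish except for the single engineered defect demands a delicately balanced signed filling, and the bookkeeping differs enough between $nk \equiv 1$ and $nk \equiv 2 \pmod 4$ (and between $n,k$ even and odd) that each residue class will need its own pattern. The fragile ranges are the boundaries — $k$ close to $n$, where there are few empty cells and the cyclic-diagonal pattern has almost no slack, and $k$ small — which will likely resist the generic construction and require ad hoc arrays or a handful of explicitly exhibited small examples to seed the recursion.
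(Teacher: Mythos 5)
Your necessity argument is correct and is actually more explicit than anything in the paper: $k\le n$ is forced by the row length, and your size bound $|{\rm sum}|\le nk+(nk-1)<2nk+1$ correctly rules out $k\in\{1,2\}$. Your reduction of sufficiency to the cases $3\le k<n$ with $nk\equiv 1,2\pmod 4$ via Theorems \ref{Heffter} and \ref{intergerHeffter}, and your parity computation showing these cases cannot be integer (so the total $\Sigma$ must be a nonzero odd multiple of $2nk+1$), both match the paper's setup and are sound.

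However, the sufficiency direction for the remaining cases is where the entire content of the paper lives, and your proposal stops at a plan: you state that you \emph{would} distribute $\pm 1,\dots,\pm nk$ along $k$ cyclically consecutive diagonals so that the sums telescope, but no such filling is exhibited or verified, and producing one is precisely the hard part. Two specific points where your outline diverges from what is actually achievable deserve mention. First, your target of exactly one nonzero row and one nonzero column is not what the paper realizes; since $n$ or a factor of the defect structure must be compatible with $\Sigma$ being an odd multiple of $2nk+1$, the paper instead makes \emph{all} $n$ rows and columns sum to $2nk+1$ when $n$ is odd (Cases A and D), exactly three rows and columns nonzero when $n\equiv 2,3$ and $k\equiv 3$ (Cases B and C, via a glued $3\times 3$ block $C$), and $2m+1$ nonzero lines when $n=4m+2$, $k\equiv 1$ (Case E). Whether a single-defect design exists at all is unclear and you give no construction. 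Second, ``$k$ consecutive diagonals'' is not the shape of the actual base arrays: the $H(n;6)$ of Lemma \ref{k=6} sits on $D_0,D_2,\dots,D_6$ (skipping $D_1$), and the $H(n;5)$ of Lemma \ref{n=4m+1k=5} occupies eleven scattered diagonals, because the construction must leave the unused diagonals pairable into Hamilton cycles (or $2$-factors when $n$ is even, where $D_i\cup D_j$ need not be a single cycle) so that Theorem \ref{crucial} and Lemmas \ref{hamilton} and \ref{twofactor} can raise $k$ by $4$ repeatedly. Until explicit base arrays for each of the five residue classes are written down and their row and column sums verified, the proof is incomplete.
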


From Theorem \ref{Heffter} above, the case $n=k$ has been solved, so we henceforth assume that $n>k$. 
The cases that need to be addressed are set out in Table 2. Cases A, B, C, D and E are solved by Theorems \ref{k=4t+6}, \ref{m=4m+3k=4p+3}, \ref{n=4m+2k=4p+3}, \ref{n=4m+1k=4p+5} and \ref{n=4m+2k=4p+5}, respectively, thus proving Theorem \ref{mainthm}.

\begin{table}[h]\label{thecases}
\begin{center}
\begin{tabular}{|l||l|l|l|l|l|l|}
\hline
   &Case A                  &Case B                  &Case C    &Case D  & Case E             \\
\hline\hline
$k$&$2$ (mod $4$)&$3$ (mod $4$) & $3$ (mod $4$)    & $1$ (mod $4$)  &$1$ (mod $4$)  \\
\hline
$n$&$1,3$ (mod $4$) & $3$ (mod $4$) & $2$ (mod $4$) & $1$ (mod $4$) & $2$ (mod $4$) \\
\hline
\end{tabular}
\caption{Cases for non-integer Heffter arrays $H(n;k)$}
\end{center}
\end{table}

In this paper the rows and columns of a square $n\times n$ array are always indexed by the elements of $\{1,2,\dots ,n\}$.
Unless otherwise stated, when working modulo $n$, replace $0$ by $n$, so we use the symbols $1,\dots, n$ instead of $0,\dots, n-1$.
While rows and columns are calculated modulo an integer, entries are always expressed as non-zero integers.
Throughout this paper $A[r,c] = x$ denotes the occurrence of symbol $x$ in cell $(r,c)$ of array $A$.

 By $A\pm z$ we refer to the array obtained by replacing $A[r,c]$ by $A[r,c]+z$ (if $A[r,c]>0$) and $A[r,c]-z$  (if  $A[r,c]<0$). 
%\begin{eqnarray*}
%a=\left\{\begin{array}{ll}
%A[r,c]+z,&\mbox{if }A[r,c]>0\\
%A[r,c]-z,&\mbox{if }A[r,c]<0
%\end{array}
%\right.
%\end{eqnarray*}
If each row and each column of $A$ contains the same number
of positive and negative numbers, then $A\pm z$ has the same row and column sums as $A$. In this case we say   $A$  is {\em shiftable}. 
The {\em support} of an array $A$ is defined to be the set containing the absolute value
of the elements contained in $A$.
If $A$ is an array with support $S$ and $z$ a nonnegative
integer, then  $A\pm z$ has support $S + z$.

\section{Increasing $k$ from base cases}

For each of the cases set out in Table 2 our overall strategy is to generate a base case  $H(n;k)$ where $k$ takes the smallest possible value and then increase $k$ by multiples of $4$, adjoining 4 additional entries to each row and column. In this section we outline various tools to enable this process. To this end, we introduce the following definitions. 

We associate the cells of an $n\times n$ array with
the complete bipartite graph $K_{n,n}$ where partite sets
are denoted $\{a_i\mid i=1,\dots ,n\}$ and $\{b_j\mid j=1\dots n\}$
and the edge $\{a_i,b_j\}$ corresponds to the cell $(i,j)$.
We say that in an $n\times n$ array a set of cells $S$ forms a $2$-{\em factor} if the corresponding set of edges in the graph $K_{n,n}$ forms a spanning $2$-regular graph  and  forms a {\em Hamilton cycle}
if the corresponding set of edges forms a single cycle of length $2n$.

For each $d\in \{0,1,\dots, n-1\}$, we define the {\em diagonal} $D_d$ to be the set of cells of the form $(r+d,r)$, $1\leq r\leq n$ (evaluated modulo $n$).
Observe that the cells $D_i\cup D_j$ form a Hamilton cycle whenever $j-i$ is coprime to $n$.

\begin{lemma}
Let $S_1$ and $S_2$ be two disjoint sets of cells in an $n\times n$ array which each form Hamilton cycles.
The cells of $S_1\cup S_2$ can be filled with the elements of $\{1,2,\dots ,4n\}$ so that each row and column sum is equal to  $8n+2$.
\label{bacon}
\end{lemma}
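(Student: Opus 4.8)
The plan is first to reduce the statement to producing a labelling of the $4n$ cells of $S_1\cup S_2$ by $1,2,\dots,4n$ whose row sums are all equal and whose column sums are all equal. Since each of $S_1,S_2$ is a spanning $2$-regular subgraph of $K_{n,n}$, every row and every column lies on exactly four filled cells, and the entries sum to $1+2+\cdots+4n=2n(4n+1)$; hence a constant row sum and a constant column sum are both forced to equal $2n(4n+1)/n=8n+2$. So it is enough to make the labelling constant along rows and constant along columns, and the value $8n+2$ then comes for free.

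Next I would use the $2n$ complementary pairs $\{x,4n+1-x\}$, each summing to $4n+1$, together with the alternating structure of a Hamilton cycle. Writing $S_1$ as a cyclic sequence of cells $c_1,c_2,\dots,c_{2n}$, consecutive cells share alternately a column and a row, so the pairs $(c_1,c_2),(c_3,c_4),\dots$ each lie in one column while the pairs $(c_2,c_3),(c_4,c_5),\dots,(c_{2n},c_1)$ each lie in one row, each family meeting every column (respectively every row) exactly once; the same holds for $S_2$. I would fill every ``row pair'' of $S_1$ and of $S_2$ with a complementary pair, one symbol in each of its two cells. Every row then contains precisely two complementary pairs, so all row sums equal $2(4n+1)=8n+2$ no matter how the pairs are chosen. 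Letting $q_s$ be the symbol placed in $c_{2s}$ (its partner $4n+1-q_s$ occupying $c_{2s+1}$), the sum that $S_1$ contributes to the column through $c_{2s-1},c_{2s}$ works out to $4n+1+(q_s-q_{s-1})$, and similarly for $S_2$; these deviations telescope to $0$ around each cycle.

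It remains to kill the column deviations, and this is the step to get right. I would let the chosen symbols run through consecutive integers along each cycle with opposite senses: take the even cells of $S_1$ to carry $1,2,\dots,n$ in cyclic order (so $S_1$ uses the extreme pairs $\{1,4n\},\dots,\{n,3n+1\}$ and every column deviation of $S_1$ equals $+1$ except a single wrap-around deviation $-(n-1)$), and take the even cells of $S_2$ to carry $2n,2n-1,\dots,n+1$ (so $S_2$ uses the remaining $n$ pairs and every column deviation equals $-1$ except one wrap-around deviation $+(n-1)$). Because I am free to choose where to start reading each cycle, I would rotate $S_1$ and $S_2$ so that their two exceptional columns coincide; then at that column the deviations $-(n-1)$ and $+(n-1)$ cancel, while at every other column $+1$ and $-1$ cancel, so all column sums equal $8n+2$ as well. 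The chosen symbol sets are $\{1,\dots,n,3n+1,\dots,4n\}$ and $\{n+1,\dots,3n\}$, which are disjoint and exhaust $\{1,\dots,4n\}$. The only real subtlety is this cancellation: one must check that aligning the single large ``wrap'' of each cycle forces every column deviation to vanish, so that no relationship is needed between the (a priori unrelated) cyclic orders in which $S_1$ and $S_2$ traverse the columns.
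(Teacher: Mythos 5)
Your proof is correct and follows essentially the same strategy as the paper's: a zig-zag labelling along each Hamilton cycle that makes consecutive-pair sums constant except at a single wrap-around, with the two wrap-arounds placed in a common line so that their deviations cancel. The differences are cosmetic --- the paper gives $S_1$ the odd numbers and $S_2$ the evens and aligns the exceptional cells in a common \emph{row}, whereas you use complementary pairs $\{x,4n+1-x\}$ (so $S_1$ gets support $\{1,\dots,n\}\cup\{3n+1,\dots,4n\}$) and align exceptional \emph{columns}; your telescoping-deviation bookkeeping is a clean way of exposing why the construction works.
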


\begin{proof}
Let the cells of $S_1$ and $S_2$ be $\{e_i\mid 1\leq i\leq 2n\}$ and $\{f_i\mid 1\leq i\leq 2n\}$, respectively, where: 
\begin{itemize}
\item Cells $e_{i}$ and $e_{i+1}$ are in the same row (column) whenever $i$ is odd (respectively, even);
\item Cells $f_{i}$ and $f_{i+1}$ are in the same row (column) whenever $i$ is odd (respectively, even);
\item  Cells $e_1$ and $f_1$ are in the same row.
\end{itemize}
Place $1$ in cell $e_1$, $4n$ in cell $f_1$ and: 
\begin{itemize}
\item $2n-2i+1$ in cell $e_{2i+1}$, where $1\leq i\leq n-1$; \quad $2n+2i-1$ in cell $e_{2i}$ where $1\leq i\leq n$;
\item $2n+2i$ in cell $f_{2i+1}$ where $1\leq i\leq n-1$; \quad $2n-2i+2$ in cell $f_{2i}$ where $1\leq i\leq n$.
\end{itemize}
The entries in cells $e_1$, $e_2$, $f_1$ and $f_2$ add to $1+(2n+1)+4n+2n=8n+2$.
For every other row, there are two cells from $S_1$ with entries adding to $4n+2$ and two cells from $S_2$ with entries adding to
$4n$. For every column, there are two cells from $S_1$ adding to $4n$ and two cells from $S_2$ adding to $4n+2$.
See the example below.
\end{proof}

We demonstrate Lemma \ref{bacon} below when $n=9$. The elements of $S_1$ are shown in bold.

\begin{center}
{\small $
\renewcommand{\arraycolsep}{2pt}
\begin{array}{|c|c|c|c|c|c|c|c|c|}
\multicolumn{9}{c}{S_1\cup S_2} \\
\hline
{\bf 1} & {\bf 19} & 18 & & & & 36 & & \\
\hline
34 & {\bf 17} & {\bf 21} & & & & 2 & & \\
\hline
& & {\bf 15} & {\bf 23} & 10 & 26 & & & \\
\hline
4 & & & {\bf 13} & {\bf 25} & & & & 32 \\
\hline
& 14 & & & {\bf 11} & {\bf 27} & & 22 & \\
\hline
& & 20 & & & {\bf 9} & {\bf 29} & 16 & \\
\hline
& & & 30 & & & {\bf 7} & {\bf 31} & 6 \\
\hline
& 24 & & & & 12 & & {\bf 5} & {\bf 33} \\
\hline
{\bf 35} & & & 8 & 28 & & & & {\bf 3} \\
\hline
\end{array}
$}
\end{center}

The following theorem will be crucial in Cases A and D.

\begin{theorem}
Let $H(n;k)$ be a Heffter array such that each row and column sums to $2nk+1$.
Suppose there exist Hamilton cycles $H_1$ and $H_2$  disjoint to each other and to the filled cells of $H(n;k)$.
Then there exists an $H(n;k+4)$ Heffter array with row and column sums equal to $2n(k+4)+1$,
where the filled cells are precisely the filled cells of $H(n;k)$, $H_1$ and $H_2$.
\label{crucial}
\end{theorem}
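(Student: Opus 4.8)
The plan is to leave the filling of $H(n;k)$ essentially untouched and to fill the $4n$ cells of the two Hamilton cycles $H_1$ and $H_2$ with the $4n$ largest symbols $\{nk+1,nk+2,\dots,n(k+4)\}$, all carrying the same sign, choosing that sign (and exploiting the freedom to negate the whole array) so that the new row and column sums land on a multiple of the new modulus $2n(k+4)+1$. The point is that the four new symbols per line are large and of one sign, so their $nk$-parts do not cancel; instead of trying to make the new contribution small, I make the total a clean multiple of $2n(k+4)+1$.

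First I would invoke Lemma \ref{bacon}. Since $H_1$ and $H_2$ are disjoint Hamilton cycles, their $4n$ cells can be filled with $\{1,2,\dots,4n\}$ so that, over these new cells alone, every row sum and every column sum equals $8n+2$. I would then replace the entry $v$ placed in each such cell by $-(nk+v)$. This has two effects: the support of the new cells becomes exactly $\{nk+1,\dots,n(k+4)\}$, disjoint from the support $\{1,\dots,nk\}$ of $H(n;k)$; and the contribution of the new cells to every row and to every column becomes $-(4nk+(8n+2))$ uniformly, because each row and each column meets exactly four new cells (two from each cycle) whose original Lemma \ref{bacon} values sum to $8n+2$.

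Adjoining these filled cycles to $H(n;k)$, I would check the three defining properties of $H(n;k+4)$: each row and column now contains $k+4$ filled cells; the combined support is $\{1,\dots,nk\}\cup\{nk+1,\dots,n(k+4)\}=\{1,\dots,n(k+4)\}$ with exactly one of $x,-x$ present for each such $x$; and each row and column sum equals $(2nk+1)-(4nk+8n+2)=-(2n(k+4)+1)\equiv 0 \pmod{2n(k+4)+1}$. Finally, since negating every entry of a Heffter array produces another Heffter array with the same support and negated line sums, replacing the whole array by its negative yields row and column sums exactly $2n(k+4)+1$, as required.

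The step I expect to be most delicate is the bookkeeping that pins the new contribution to precisely $-(4nk+8n+2)$ in every line at once: this relies on Lemma \ref{bacon} delivering the same value $8n+2$ on all rows and all columns simultaneously (not merely on average), so that after the uniform shift by $nk$ and the sign change the cancellation with $2nk+1$ is identical in every row and column. Note that no further structural hypothesis on $H(n;k)$ is needed beyond the stated line sums, and in particular shiftability is never used, so the same argument should apply verbatim in Cases A and D, where the two required disjoint Hamilton cycles can be supplied by pairs of diagonals $D_i\cup D_j$ with $j-i$ coprime to $n$ chosen to avoid the occupied cells.
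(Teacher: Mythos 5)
Your proposal is correct and is essentially the paper's own argument: the paper negates the entries of $H(n;k)$ at the outset and adds $nk$ to the positive values supplied by Lemma \ref{bacon}, whereas you instead negate the shifted new entries and then negate the entire array at the end, producing the identical final array with row and column sums $-(2nk+1)+(8n+2)+4nk=2n(k+4)+1$. The bookkeeping you flag as delicate is exactly what Lemma \ref{bacon} guarantees (the value $8n+2$ on every row and every column, not just on average), so there is no gap.
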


\begin{proof}
Let $A_0$ represent the $H(n;k)$ and negate each element so that each row and column has sum equal to $-(2nk+1)$.
From Lemma \ref{bacon}, there exists an array $A_1'$ on the cells of $H_1$ and $H_2$ such that each row and column sum is equal to $8n+2$; 
add $n(k+4)-(4n)=nk$ to each element of $A_1'$ to create a new array $A_1$ that has support 
$\{nk+1,nk+2,\dots ,n(k+4)\}$. Note that in $A_1$ each row and column sum is equal to $8n+2+4nk$. Let $A$ be the union of $A_0$ with $A_1$.
The resulting array $A$ has support $\{1,2,\dots ,n(k+4)\}$, with $k+4$ filled cells in each row and column. Finally, each row and column sum of $A$ is 
$-(2nk+1)+(8n+2)+4(nk)=2n(k+4)+1$, as desired.
\end{proof}

The following lemma generalizes Theorem 2.2 from \cite{DW}, and is used in Cases B and C. 

\begin{lemma}
Let $S_1$ and $S_2$ be two disjoint sets of cells in an $n\times n$ array which each form Hamilton cycles. Then for any positive integers $t$
 and $s>t+2n$,
the cells of $S_1\cup S_2$ can be filled with elements to make a shiftable array with support   
$\{s+i,t+i\mid 1\leq i\leq 2n\}$ so that the four elements in each row and each column sum to $0$.
\label{hamilton}
\end{lemma}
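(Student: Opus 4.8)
The plan is to mimic the structure of the proof of Lemma~\ref{bacon}, since the two statements share the same combinatorial skeleton: we have two disjoint Hamilton cycles $S_1$ and $S_2$ on the cells of an $n\times n$ array, and each Hamilton cycle, being a cycle of length $2n$ in $K_{n,n}$, alternates between ``row steps'' and ``column steps.'' As before I would label the cells of $S_1$ as $\{e_i\mid 1\le i\le 2n\}$ and those of $S_2$ as $\{f_i\mid 1\le i\le 2n\}$, so that $e_{2j-1},e_{2j}$ share a row and $e_{2j},e_{2j+1}$ share a column (indices mod $2n$), and similarly for the $f_i$. The goal is now to assign the $4n$ values from the prescribed support $\{s+i,t+i\mid 1\le i\le 2n\}$ to these cells so that (i) the array is shiftable and (ii) every row and every column sum is $0$.

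The key idea is to fill $S_1$ using the block $\{t+i\mid 1\le i\le 2n\}$ and $S_2$ using the block $\{s+i\mid 1\le i\le 2n\}$, arranging signs and magnitudes so that each row and column picks up a cancelling pair. Concretely, I would place the $2n$ values of each block so that consecutive cells along a cycle alternate in sign, and so that each row-pair and column-pair consists of two entries whose signed sum is a fixed small constant for $S_1$ and the negative of that constant for $S_2$ (analogous to the $4n+2$ versus $4n$ balancing in Lemma~\ref{bacon}). Because the four cells meeting in any single row come as two from $S_1$ and two from $S_2$, and likewise for columns, the contributions from the two cycles cancel in pairs, forcing the total row (column) sum to $0$. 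For shiftability I must check that each row and each column contains exactly two positive and two negative entries; the alternating-sign assignment along each Hamilton cycle is precisely what guarantees this, since each row-pair within a cycle consists of one positive and one negative cell.

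The main obstacle, as in Lemma~\ref{bacon}, is bookkeeping the parity and the wrap-around cells $e_1,f_1$ (and their partners $e_{2n},f_{2n}$) where the alternating pattern closes up. I expect to designate a single ``seam'' row where $e_1$ and $f_1$ meet, give those cells extremal values of the two blocks, and verify separately that the seam row and the two columns incident to it still balance to $0$; the hypothesis $s>t+2n$ is exactly what keeps the two blocks $\{t+i\}$ and $\{s+i\}$ disjoint so that the support is as claimed and no value is repeated. The remaining rows and columns then balance uniformly by the alternating construction. Once the explicit formulas for the entries of $e_i$ and $f_i$ are written down as linear functions of $i$ (shifted by $s$ or $t$), verifying the four defining sum conditions and the two-positive/two-negative count is a routine finite check, so I would state the formulas and confirm the balancing on a generic row, a generic column, and the seam, deferring the full verification to a short computation.
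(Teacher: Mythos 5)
Your proposal follows essentially the same route as the paper's proof: assign one arithmetic block to each Hamilton cycle with signs alternating along the cycle, so that the pair of $S_1$-cells in each row (column) sums to a small constant cancelled by the pair of $S_2$-cells, with the seam cells $e_1,f_1$ receiving the extremal values and checked separately, and with $s>t+2n$ playing exactly the role you identify. The paper simply writes out the explicit assignment ($s+2n$ in $e_1$, $-(t+2n)$ in $f_1$, $s+2i$ in $e_{2i+1}$, $-(s+2i-1)$ in $e_{2i}$, and the negatives of the corresponding $t$-values on the $f$-cells) that your sketch defers to a routine computation.
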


\begin{proof}
Let the sets of cells of $S_1$ and $S_2$ be $\{e_i\mid 1\leq i\leq 2n\}$ and $\{f_i\mid 1\leq i\leq 2n\}$, respectively,
where:
\begin{itemize}
\item Cells $e_{i}$ and $e_{i+1}$ are in the same row (column) whenever $i$ is odd (respectively, even);
\item Cells $f_{i}$ and $f_{i+1}$ are in the same row (column) whenever $i$ is odd (respectively, even);
\item  Cells $e_1$  and $f_1$ are in the same row.
\end{itemize}
Place:
\begin{itemize}
\item $s+2n$ in cell $e_1$ and $-(t+2n)$ in cell $f_1$, with sum $s-t$;
\item $s+2i$ in cell $e_{2i+1}$  and $-(t+2i)$ in cell $f_{2i+1}$, with sum $s-t$, for  $1\leq i\leq n-1$,
\item $-(s+2i-1)$ in cell $e_{2i}$ and  $t+2i-1$ in cell $f_{2i}$, with sum $t-s$, for  $1\leq i\leq n$.
\end{itemize}
It now follows that the row sums are $0$. Using similar arguments it can be seen that the columns also sum to  $0$.
Observe that there are two positive and two negative integers in each row and column; thus the array is shiftable. 
\end{proof}

The proof of the following lemma is similar to the proof of Lemma \ref{hamilton}; we use this in Case E.

\begin{lemma}
Let $n$ be even.
Let $S_1$ and $S_2$ be two disjoint sets of cells in an $n\times n$ array which each form $2$-factors that are the union of two $n$-cycles. Then for any positive integers $s$, $t$, $u$ and $v$ where $s>t+n$, $t>u+n$ and $u>v+n$, 
the cells of $S_1\cup S_2$ can be filled with elements to make a shiftable array with support
$\{s+i,t+i,u+i,v+i\mid 1\leq i\leq n\}$ so that the four elements in each row and column sum to $0$.
\label{twofactor}
\end{lemma}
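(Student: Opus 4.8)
The plan is to apply the construction of Lemma \ref{hamilton} separately to each of the four $n$-cycles, rescaling by replacing $n$ with $n/2$ (this is exactly why $n$ is assumed even). Write $S_1$ as the disjoint union of its two $n$-cycles and label the cells of the first cyclically as $g_1,\dots,g_n$, so that consecutive cells share a row when the first index is odd and a column when it is even; because $n$ is even this labeling closes up, giving $n/2$ row-pairs $\{g_{2i-1},g_{2i}\}$ and $n/2$ column-pairs $\{g_{2i},g_{2i+1}\}$ (indices modulo $n$), one of each in every row and column that the cycle meets. Label the second cycle of $S_1$ and the two cycles of $S_2$ in the same way. I would fill the first $S_1$-cycle using the band $\{s+i\mid 1\le i\le n\}$ exactly as the $e_i$ are filled in Lemma \ref{hamilton} (with $n$ replaced by $n/2$), the second $S_1$-cycle using $\{u+i\}$, and the two $S_2$-cycles using $\{t+i\}$ and $\{v+i\}$ exactly as the $f_i$ are filled. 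The gap hypotheses $s>t+n$, $t>u+n$, $u>v+n$ serve only to make the four bands pairwise disjoint, so that the support is precisely $\{s+i,t+i,u+i,v+i\mid 1\le i\le n\}$.

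Repeating the arithmetic of Lemma \ref{hamilton} with $n$ replaced by $n/2$, the two values filling any pair cancel the band constant, so the pair-sums do not depend on which band was used: in each $S_1$-cycle every column-pair sums to $+1$ and every row-pair to $-1$, except for one \emph{special} row-pair (the one containing the cell carrying $w+n$) which sums to $n-1$; in each $S_2$-cycle every column-pair sums to $-1$ and every row-pair to $+1$, except for one special row-pair summing to $-(n-1)$. Each row and column carries one positive and one negative entry from each of $S_1$ and $S_2$, so the array is shiftable. Since each column meets exactly one cycle of $S_1$ and one of $S_2$ and receives one column-pair from each, every column sum is $(+1)+(-1)=0$ automatically.

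The one thing that can fail, and the main obstacle, is a row containing a special pair from $S_1$ but a generic pair from $S_2$ (or vice versa): such a row would sum to $\pm n$. This is resolved by controlling where the special pairs fall. Starting the cyclic labeling of a cycle at any of its row-edges makes the special row-pair lie in that chosen row, so the special row of each cycle may be taken to be any row the cycle meets. Let $R_1,R_2$ be the two sets of $n/2$ rows met by the cycles of $S_1$, and $R_1',R_2'$ those of $S_2$; these give two partitions of $\{1,\dots,n\}$ into halves. I would show that either $R_1\cap R_1'$ and $R_2\cap R_2'$ are both non-empty, or $R_1\cap R_2'$ and $R_2\cap R_1'$ are both non-empty: if, say, $R_1\cap R_1'=\emptyset$, then $R_1=R_2'$ and $R_2=R_1'$ by equality of sizes, forcing the second alternative. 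Choosing one special row from each of the two non-empty intersections makes the two special rows of $S_1$ coincide with the two special rows of $S_2$. Then every special row sums to $(n-1)+(-(n-1))=0$ and every other row to $(-1)+(+1)=0$, which completes the construction.
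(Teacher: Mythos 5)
Your construction is correct and is essentially the paper's own proof: the same four arithmetic-progression bands are placed on the four $n$-cycles with the same sign pattern (the $e$-pattern of Lemma \ref{hamilton}, rescaled to length $n$, on the two cycles of $S_1$ and the $f$-pattern on those of $S_2$), producing the same $\pm 1$ cancellations in every row and column. The only difference is that you explicitly verify that the special row-pairs of the $S_1$-cycles can be aligned with those of the $S_2$-cycles via the intersection argument on the two row-partitions, a point the paper simply builds into its labeling by requiring $e_1,g_1$ and $f_1,h_1$ to share rows.
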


\begin{proof}
Let $C_i$, $C_i'$ be the cycles of the $2$-factor $S_i$, $i\in \{1,2\}$, where $C_1$ and $C_1'$ share a row and $C_2$ and $C_2'$ share a row. 
Let the sets of cells of $C_1$, $C_1'$, $C_2$ and $C_2'$ be
$\{e_i\mid 1\leq i\leq n\}$, $\{f_i\mid 1\leq i\leq n\}$, $\{g_i\mid 1\leq i\leq n\}$ and $\{h_i\mid 1\leq i\leq n\}$, respectively,
where:
\begin{itemize}
\item Cells $e_{i}$ and $e_{i+1}$ are in the same row (column) whenever $i$ is odd (respectively, even);
\item Cells $f_{i}$ and $f_{i+1}$ are in the same row (column) whenever $i$ is odd (respectively, even);
\item Cells $g_{i}$ and $g_{i+1}$ are in the same row (column) whenever $i$ is odd (respectively, even);
\item Cells $h_{i}$ and $h_{i+1}$ are in the same row (column) whenever $i$ is odd (respectively, even);
\item  Cells $e_1$ and $g_1$ are in the same row; Cells $f_1$ and $h_1$ are in the same row.
\end{itemize}
Place:
\begin{itemize}
\item $s+n$ in cell $e_1$, $-(t+n)$ in cell $g_1$ and $u+n$ in cell $f_1$, $-(v+n)$ in cell $h_1$;
\item $s+2i$ in cell $e_{2i+1}$ and $-(t+2i)$ in cell $g_{2i+1}$, for $1\leq i\leq n/2-1$;
\item $-(s+2i-1)$ in cell $e_{2i}$ and $t+2i-1$ in cell $g_{2i}$, for  $1\leq i\leq n/2$;
\item $u+2i$ in cell $f_{2i+1}$ and $-(v+2i)$ in cell $h_{2i+1}$, for  $1\leq i\leq n/2-1$;
\item $-(u+2i-1)$ in cell $f_{2i}$ and $v+2i-1$ in cell $h_{2i}$, for $1\leq i\leq n/2$.
\end{itemize}

It now follows that the row sums are $0$. Using similar arguments it can be seen that the columns also sum to  $0$.
\end{proof}

\section{Case A: $k\equiv 2$ (mod $4$)}

In this section we construct a Heffter array $H(n;k)$, for $n\equiv 1,3 \ (\mbox{mod }4)$ and $k\equiv 2$ (mod $4$), where $k<n$.
Row and column sums will always equal $2nk+1$. We start with an example of our construction. 

%{\small
%\renewcommand{\tabcolsep}{3pt}
\begin{center}
{\small $
\renewcommand{\arraycolsep}{2pt}
\begin{array}{|c|c|c|c|c|c|c|c|c|c|c|c|c|c|c|}
\multicolumn{15}{c}{H(15;6)} \\ 
\hline
  6&   &   &   &   &   &   &   &   &- 4& 89& 81&  1&  8 &\\
\hline
   & 12&   &   &   &   &   &   &   &   &-88& 83& 87& 85&  2\\
\hline
 86&   & 18&   &   &   &   &   &   &   &   &-82& 77&  3& 79\\
\hline
 73& 80&   & 24&   &   &   &   &   &   &   &   &-76& 71&  9\\
\hline
 15& 67& 74&   & 30&   &   &   &   &   &   &   &   &-70& 65\\
\hline
 59& 21& 61& 68&   & 36&   &   &   &   &   &   &   &   &-64\\
\hline
-58& 53& 27& 55& 62&   & 42&   &   &   &   &   &   &   &   \\
\hline
   &-52& 47& 33& 49& 56&   & 48&   &   &   &   &   &   &   \\
\hline
   &   &-46& 41& 39& 43& 50&   & 54&   &   &   &   &   &   \\
\hline
   &   &   &-40& 35& 45& 37& 44&   & 60&   &   &   &   &   \\
\hline
   &   &   &   &-34& 29& 51& 31& 38&   & 66&   &   &   &   \\
\hline
   &   &   &   &   &-28& 23& 57& 25& 32&   & 72&   &   &   \\
\hline
   &   &   &   &   &   &-22& 17& 63& 19& 26&   & 78&   &   \\
\hline
   &   &   &   &   &   &   &-16& 11& 69& 13& 20&   & 84&   \\
\hline
   &   &   &   &   &   &   &   &-10&  5& 75&  7& 14&   & 90\\
\hline
   \end{array}
   $
}
\end{center}
%}

\medskip

\begin{lemma}\label{k=6} For $n\equiv\ 1,3\ (\mbox{mod }4)$, $n\geq 7$ and $k=6$ there exists a Heffter array $H(n;6)$.
\end{lemma}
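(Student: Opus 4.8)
The plan is to give an explicit construction that fills six of the diagonals, namely $D_0, D_2, D_3, D_4, D_5, D_6$, leaving $D_1$ and $D_7,\dots,D_{n-1}$ empty. Since $n\geq 7$ guarantees these six diagonals are distinct, this automatically places exactly six filled cells in each row and each column. I would assign values by residue class modulo $6$: the positive multiples of $6$ go on $D_0$ (with $A[r,r]=6r$); the positive integers congruent to $1,2,3,5\pmod 6$ go on $D_3,D_2,D_4,D_5$ respectively; and the negatives of the integers congruent to $4\pmod 6$ go on $D_6$. Because the six residue classes partition $\{1,2,\dots,6n\}$ into six blocks of size $n$, and each diagonal has exactly $n$ cells, the support of the array is precisely $\{1,\dots,6n\}$, with each value occurring with exactly one sign; this establishes the Heffter support condition.

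Within each diagonal the $n$ chosen values are laid out as an arithmetic progression of common difference $6$ (increasing along the diagonal on $D_0,D_4,D_6$ and decreasing on $D_2,D_3,D_5$), running cyclically, so that every cell differs from the next cell of its diagonal by $\pm 6$ except at a single \emph{seam} where the progression wraps from its last term back to its first. The crucial design choice is the starting cell of each progression: I would position the six seams so that they fall at exactly three row-transitions, pairing one increasing diagonal with one decreasing diagonal at each. The example $H(15;6)$ realises this with the seam pairs $\{D_3,D_6\}$, $\{D_2,D_4\}$ and $\{D_0,D_5\}$ occurring at the transitions between rows $(1,2)$, $(2,3)$ and $(n,1)$ respectively.

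To verify the sums I would argue by telescoping rather than summing each row directly. Passing from row $r$ to row $r+1$, each diagonal entry changes by its common difference, and for $D_0,D_2,D_3,D_4,D_5,D_6$ these changes are $+6,-6,-6,+6,-6,+6$, which total $0$; hence away from seams consecutive row sums are equal. At a seam transition the two wrapping diagonals deviate from their nominal $\pm 6$ steps by exactly $+6n$ (a decreasing progression wrapping up) and $-6n$ (an increasing one wrapping down), and since the seams are paired one-increasing-with-one-decreasing, these deviations cancel, so the row sum is preserved across seams as well. Thus all row sums are equal, and computing any single row gives $2nk+1=12n+1$. The identical argument applied column-by-column, using that advancing to the next column moves every diagonal to its next cell and so obeys the same $\pm 6$ bookkeeping, shows all column sums equal $12n+1$.

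The main obstacle is making the seam-pairing rigorous for general $n$: I must provide closed-form formulas for the starting cell, and hence the entry, of each diagonal as a function of $r$ and $n$, prove that each of the three seam-transitions receives exactly one increasing and one decreasing wrap, and check the handful of boundary rows adjacent to the seams explicitly. It is plausible that the starting offsets, and therefore the placement of the three seams, must be specified slightly differently for $n\equiv 1$ and $n\equiv 3\pmod 4$, so I would be prepared to split into these two subcases. As a global consistency check I would confirm that the total of all entries equals $n(12n+1)$, which indeed holds since $\sum_{x=1}^{6n} x - 2\sum_{x\equiv 4\,(6)} x = 12n^2+n$.
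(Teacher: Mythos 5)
Your construction is exactly the one in the paper: the same six diagonals $D_0,D_2,\dots,D_6$, the same assignment of residue classes modulo $6$ to diagonals with the same signs and monotonicity directions, and the same seam placements (as the paper's closed-form entries $A[i,i]=6i$, $A[i+2,i]=6n+2-6i$, etc.\ confirm), so this is essentially the paper's proof with a mildly different verification style (telescoping consecutive row/column sums plus the total-sum identity, versus the paper's direct listing and summation of each row and column). The remaining work you flag --- writing the per-diagonal formulas and checking the seams --- is routine and needs no case split on $n\bmod 4$.
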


\begin{proof}
We remind the reader that rows and columns are calculated modulo $n$ but the array entries are not.
The array $A=A[r,c]$ is defined as follows, where $1\leq i\leq n$:
\begin{center}
$\begin{array}{lll}
A[i,i]=6i, & A[i+2,i]=6n+2-6i, \\
 A[i+1,n-2+i]=6n+1-6i, &
A[i+2,n-2+i]=6i-3, \\
 A[i,n-5+i]=6n+5-6i, & A[i+1,n-5+i]=-6n-4+6i. 
\end{array}$
\end{center}

Then the support of $A$ is $\{1,\dots,6n\}$.
The sets of elements in rows $1$, $2$ and $i$, $3\leq i\leq n$, are, respectively:
\begin{center}
$\begin{array}{l}
\{6,8,1,6n-9,6n-1,-4\}, \qquad \{12,2,6n-5,6n-3,6n-7,-(6n-2)\}, \\
\{6i,6n+2-6(i-2),6n+1-6(i-1),6(i-2)-3,6n+5-6i, -6n-4+6(i-1)\}.
\end{array}$
\end{center}
Thus in each case the sum of elements in a row is $12n+1$. 

The set of elements in column $i$, $1\leq i\leq n-5$ is:
$$\{6i,6n+2-6i,6n+1-6(i+2), 6(i+2)-3,6n+5-6(i+5), -6n-4+6(i+5)\}.$$
The set of elements in columns $n-4$, $n-3$, $n-2$, $n-1$ and $n$ are, respectively:
\begin{center}
$\begin{array}{ll}
\{6n-24,26,13,6n-15,6n-1,-(6n-2)\}, & 
\{6n-18,20,7,6n-9,6n-7,-(6n-8)\}, \\
\{6n-12,14,1,6n-3,6n-13,-(6n-14)\}, &
\{6n-6,8,6n-5,3,6n-19,-(6n-20)\}, \\
\{6n,2,6n-11,9,6n-25,-(6n-26)\}.
\end{array}$
\end{center}
Thus in each case the sum of elements in a column is $12n+1$. 
\end{proof}

%Theorem \ref{crucial} is now used to extend to all $k\equiv\ 2\ (\mbox{mod }4)$ with $k\geq 6$. 

\begin{theorem}\label{k=4t+6} There exists a Heffter array $H(n;k)$ for all $n\equiv 1,3\ (\mbox{mod }4)$ and $k\equiv 2\ (\mbox{mod }4)$, where $n>k\geq 6$.
\end{theorem}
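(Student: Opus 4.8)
The plan is to combine the base case from Lemma~\ref{k=6} with the inductive step provided by Theorem~\ref{crucial}. Since Theorem~\ref{k=4t+6} concerns $k\equiv 2\ (\mbox{mod }4)$ and Lemma~\ref{k=6} already handles $k=6$ for all $n\equiv 1,3\ (\mbox{mod }4)$ with $n\geq 7$, every admissible $k$ can be written as $k=6+4m$ for some integer $m\geq 0$. I would therefore argue by induction on $m$: the case $m=0$ is exactly Lemma~\ref{k=6}, and the inductive step from $k$ to $k+4$ is supplied by Theorem~\ref{crucial}, provided we can locate two Hamilton cycles $H_1$ and $H_2$ disjoint from each other and from the currently filled cells. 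Note that Lemma~\ref{k=6} produces an array whose row and column sums equal $12n+1=2n(6)+1$, which matches the hypothesis ``each row and column sums to $2nk+1$'' required by Theorem~\ref{crucial}, and that theorem preserves this form of the sum as $k$ increases.

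The heart of the argument is thus the combinatorial task of finding, at each stage, two disjoint Hamilton cycles avoiding the filled cells. Here I would exploit the diagonal structure noted just before Lemma~\ref{bacon}: for each $d\in\{0,1,\dots,n-1\}$ the diagonal $D_d$ consists of the cells $(r+d,r)$, and the union $D_i\cup D_j$ forms a Hamilton cycle whenever $j-i$ is coprime to $n$. The strategy is to keep the filled cells of each intermediate $H(n;k)$ confined to a bounded band of diagonals, so that plenty of empty diagonals remain. In fact one can arrange that the $H(n;6)$ occupies a fixed set of diagonals, and that each application of Theorem~\ref{crucial} adds exactly four new diagonals (two Hamilton cycles, each a union of two diagonals). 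Since increasing $k$ from $6$ to $n-\epsilon$ requires at most $(n-6)/4$ steps, the total number of diagonals used is bounded by a linear function of $n$, and for $k<n$ there remain unused diagonals from which to select the next pair of Hamilton cycles.

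The main obstacle I anticipate is the number-theoretic bookkeeping needed to guarantee that, at every step and for every residue class of $n$ modulo $4$, two fresh diagonals $D_i,D_j$ with $\gcd(j-i,n)=1$ are simultaneously available outside the occupied band, and that the second Hamilton cycle can be chosen disjoint from the first. Because $\gcd(j-i,n)=1$ is required, one cannot simply take consecutive diagonals when $n$ is even; but here $n$ is odd in Case~A (as $n\equiv 1,3\ (\mbox{mod }4)$), so differences of $1$ and $2$ are always coprime to $n$, which makes pairing adjacent diagonals into Hamilton cycles straightforward. I would verify explicitly which diagonals the construction of Lemma~\ref{k=6} occupies, track how the band grows under each step, and confirm the count $k<n$ leaves room; this amounts to checking that the filled diagonals never exhaust the cyclic group $\Z_n$ before $k$ reaches $n$. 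Assembling these pieces—base case, diagonal availability, and the invariant row/column sum $2nk+1$—completes the induction and hence the proof.
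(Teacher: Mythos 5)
Your proposal is correct and follows essentially the same route as the paper: the base case is Lemma~\ref{k=6} (whose array occupies only diagonals $D_0,D_2,\dots,D_6$), and Theorem~\ref{crucial} is applied recursively with Hamilton cycles obtained by pairing consecutive empty diagonals, which works because $n$ is odd, with the count $k=4p+6\leq n-1$ guaranteeing enough free diagonals. The only detail left implicit in your sketch---the explicit pairing $D_7,\dots,D_{6+4p}$ into $2p$ Hamilton cycles---is exactly what the paper writes out.
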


\begin{proof}
Let $k=4p+6$. Then $4p+6\leq n-1$ so $p\leq (n-7)/4$. We have solved the case $p=0$ in Lemma \ref{k=6} so we may assume $p\geq 1$.  
Observe that the Hefter array given in the proof of that lemma uses only elements in diagonals $D_0,D_2,D_3,D_4,D_5$  and $D_6$ and so does not intersect the diagonals $D_7,D_8,\dots , D_{n-1}$. 
We can  apply Theorem \ref{crucial} recursively, where the diagonals $D_7,D_8,\dots ,$ $D_{6+2p-1},D_{6+2p}$ can be paired to give sets of cells $S_1$ and $D_{6+2p+1},D_{6+2p+2},\dots , D_{6+4p-1},D_{6+4p}$ paired to give sets of cells $S_2$.
The result is a Heffter array $H(n;k)$ with constant row and column sum $2nk+1$ whenever $k\equiv 2$ (mod $4$), $n\equiv 1,3$ (mod $4$) and $n>k\geq 6$.
\end{proof}

\section{Case B: $k\equiv\ 3$ (mod $4$) and $n\equiv\ 3$  (mod $4$)}

In this section we construct a Heffter array $H(n;k)$ where $n=4m+3$ and $k=4p+3$, where $k<n$.  We first assume that $m\geq 4$.

 We will begin with  $k=3$ and  construct $n\times n$ array which is the concatenation of three smaller arrays, $A_0=A_0[r,c]$, of dimension $(4m-7)\times (4m-7)$, $A_1=A_1[r,c]$ of dimension $7\times 7$ and $C$ of dimension $3\times 3$, each containing $3$ filled cells per row and column. So we see that $n= 4m+3$.  The sum of the rows and columns in $A_0$ and $A_1$ will be $0$, while the sum of the  rows and columns in $C$ will be $2nk+1$.

We begin with an example of the main construction of this section.

\begin{center}
{\small $
\renewcommand{\arraycolsep}{2pt}
\begin{array}{|c|c|c|c|c|c|c|c|c|c|c|c|c|c|c|c|c|c|c|c|c|}
\multicolumn{19}{c}{H(19;3)} \\
\hline
16&-48&32&&&&&&&&&&&&&&&&\\
\hline
17&27&&-44&&&&&&&&&&&&&&&\\
\hline
-33&&-14&&47&&&&&&&&&&&&&&\\
\hline
&21&&15&&-36&&&&&&&&&&&&&\\
\hline
&&-18&&-13&&31&&&&&&&&&&&&\\
\hline
&&&29&&-9&&-20&&&&&&&&&&&\\
\hline
&&&&-34&&-12&&46&&&&&&&&&&\\
\hline
&&&&&45&&-10&-35&&&&&&&&&&\\
\hline
&&&&&&-19&30&-11&&&&&&&&&&\\
\hline
&&&&&&&&&-25	&24	      &1	   &        &        &         &         &          &         &         \\		
\hline	
&&&&&&&&&22    &-50 &		   &28    &        &         &         &          &         &         \\	

\hline		
&&&&&&&&&3		&         &37   &	    &-40    &         &         &          &         &         \\	

\hline 		
&&&&&&&&&	    &26	  &        &23    &	     &-49 &	     &          &         &         \\

\hline
&&&&&&&&&        &         &-38&        &42   &		   &-4       &          &         &         \\

\hline
&&&&&&&&&		&         &        &-51&        &8       &   43 &          &         &         \\

\hline
&&&&&&&&&		&         &        &        &-2      &	41  &-39 &          &         &         \\

\hline
&&&&&&&&&        &         &        &        &        &         &         &  5       &57    &53    \\

\hline
&&&&&&&&&        &         &        &        &        &         &         &  54   &6        &55   \\

\hline
&&&&&&&&&        &         &        &        &        &         &         &  56   &52   &7        \\
        \hline
        \end{array}$
}
        \end{center}

\begin{lemma}\label{n=4m+3k=3} For $n\equiv\ 3\ (\mbox{mod }4)$, $n\geq 7$ and $k=3$ there exists a Heffter array $H(n;3)$.
\end{lemma}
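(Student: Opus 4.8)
The plan is to construct the array $H(n;3)$ explicitly for $n = 4m+3$ by giving closed-form formulas for the entries, exactly as was done in Lemma \ref{k=6}, since the displayed $H(19;3)$ example (the case $m=4$) strongly suggests that the authors intend a direct formulaic construction rather than a recursive one. The structure to exploit is already spelled out in the paragraph preceding the example: the array decomposes as a block-diagonal concatenation of three pieces, an $(4m-7)\times(4m-7)$ block $A_0$, a $7\times 7$ block $A_1$, and a $3\times 3$ block $C$, with the first two blocks having all row and column sums equal to $0$ and the block $C$ carrying the full row/column sum $2nk+1 = 6n+1$. Because $A_0$ and $A_1$ sum to zero, the overall row and column sums are governed entirely by $C$, and the nontrivial arithmetic reduces to verifying three separate, largely independent sum conditions plus the single global support condition.

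First I would pin down $C$: it is a $3\times 3$ array on the last three rows and columns whose nine entries must be chosen so that every row and column sums to $6n+1$. From the example ($n=19$, target sum $115$) the block uses the nine largest symbols, and a natural choice is to take the symbols $\{6n-2i \mid i\}$ and $\{6n-1-2i\mid i\}$ arranged in a pattern making each line sum equal; I would write $C$ with one fixed small-magnitude pattern (e.g.\ $C[n-2,n-2]=5$ and the rest near $6n$) and check the three row and three column equations directly. Next I would specify $A_0$ as a banded array supported on three diagonals (the example shows entries on $D_0$, a shifted diagonal, and one ``return'' diagonal carrying the negative entries), giving each entry of $A_0$ as a linear function of the row index $i$ with $1\le i\le 4m-7$, analogous to the six formulas in Lemma \ref{k=6}. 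Then $A_1$, the $7\times 7$ transition block, would be given as an explicit constant table (it is small and non-generic, so a fixed $7\times 7$ array of symbols is cleaner than a formula), chosen to (i) have all row and column sums $0$, (ii) be internally consistent with the banding of $A_0$ at the seam, and (iii) together with $A_0$ and $C$ use each symbol in $\{1,\dots,6n\}$ exactly once up to sign.

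The verification then splits into three routine-but-careful checks. The sum check for $A_0$ follows because each band contributes a telescoping linear-in-$i$ term and the three terms are designed to cancel; I would exhibit the row-element set and column-element set as in Lemma \ref{k=6} and read off that each sums to $0$. The sum checks for $A_1$ and $C$ are finite and done by inspection of the explicit tables. The support check is the one place where the three blocks interact globally: I must confirm that the symbols used by $A_0$ (a long arithmetic-progression-like family), by $A_1$ (seven rows' worth of mid-range symbols), and by $C$ (the top handful) partition $\{1,\dots,6n\}$ with no collisions and no omissions, choosing the signs so that positive/negative placement is consistent. The main obstacle will be engineering $A_1$ and the seam: the generic diagonal formulas for $A_0$ break down in the last several rows/columns (the bands wrap around modulo $n$ and interfere), so the $7\times 7$ corrector block must simultaneously absorb those boundary irregularities, keep its own lines summing to zero, and slot its symbols precisely into the gaps left by $A_0$ and $C$ in the support $\{1,\dots,6n\}$. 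I expect the clean banded part to be quick and the bulk of the genuine work to lie in certifying that this boundary block does all three jobs at once; I would also note that this lemma assumes $m\ge 4$ (so $4m-7\ge 9$), and the remaining small cases $n=7,11$ would be handled separately, presumably by direct constructions given elsewhere.
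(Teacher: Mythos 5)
Your overall architecture matches the paper exactly: an $(4m-7)\times(4m-7)$ block $A_0$ and a $7\times 7$ block $A_1$ with zero line sums, plus a $3\times 3$ block $C$ with line sums $2nk+1$, concatenated block-diagonally, with the small cases handled by separate explicit arrays. Two remarks on the route. First, the paper does not derive $A_0$ from scratch via new banded formulas as you propose; it takes the first $4m-8$ rows and columns of the known integer Heffter array $H(4m;3)$ from \cite{ADDY}, closes it up with three extra entries (so only one row sum and one column sum need re-checking), and then permutes rows and columns so the filled cells land on the diagonals $D_0,D_1,D_2,D_{n-2},D_{n-1}$ (this last step is only needed for the later $k>3$ theorem). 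Reusing the known array is a meaningful shortcut you would be giving up. Second, the general construction requires $m\geq 4$, i.e.\ $n\geq 19$, so the separately handled small cases are $n=7$, $11$ \emph{and} $15$, not just $7$ and $11$.

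The genuine gap is that the proof is never actually carried out. The lemma is an existence statement proved by explicit construction, and you supply none of the three constructions: no formulas or entries for $A_0$, no $7\times 7$ table for $A_1$, and no entries for $C$ beyond a guess about its support. You correctly identify that the hard part is engineering $A_1$ so that it simultaneously has zero line sums and fills exactly the $23$ gaps left in $\{1,\dots,6n\}$ by $A_0$ and $C$ --- the paper's $A_1$ has the quite specific support $\{1,2,3,4,2m,6m-2,\dots,6m+2,6m+4,10m-3,\dots,10m+3,12m+1,12m+2,12m+3\}$, and finding such a table is the real content of the lemma --- but identifying where the work lies is not the same as doing it. As written, the proposal is a plan for a proof rather than a proof; every one of the three verification steps you list (row/column sums of each block, and the global support partition) is contingent on data you have not produced.
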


\begin{proof}
Let $n=4m+3$. 
We first assume that $m\geq 4$ and so $n\geq 19$. The small cases will be dealt with at the end of the proof. 
Let $A_0'$ be: 
%\begin{eqnarray*}
$$\begin{array}{lll}
A_0'[2i-1,2i] = 8m+1-i,       \qquad A_0'[2i,2i-1] = -(8m+i),      &      & 1\leq i\leq m,\\
A_0'[2i,2i+1] = 12m-i,    \ \ \  \     \qquad A_0'[2i+1,2i]=-(4m+1+i),         &   & 1\leq i\leq m-1,\\
A_0'[2m-2+2i,2m-1+2i] = 5m+i, &  & 1 \leq i\leq m-3,\\
A_0'[2m-1+2i,2m-2+2i] = -(11m+1-i), &  & 1 \leq i\leq m-3,\\
A_0'[2m-1+2i,2m+2i] = 9m+i,   &    & 1 \leq i\leq m-4,\\   
 A_0'[2m+2i,2m-1+2i] = -(7m+1-i), &   & 1 \leq i\leq m-4,\\
A_0'[i+1,i+1] = -(4m-1-i),        &                               & 1\leq i\leq 2m-2, \\
A_0'[2m+i,2m+i] = 2m-i,        &                                  & 1 \leq i\leq 2m-8, \\
\quad A_0'[2m,2m] = 4m-1,            \quad A_0'[1,4m-7] = -12m, \quad A_0'[1,1] = 4m, \\
 \quad A_0'[4m-7,1] = 4m+1,  
\quad A_0'[4m-7,4m-7]=6m+3.
\end{array}$$
We illustrate $A_0'$ in the case $m=4$:
\begin{center}
{\small $
\renewcommand{\arraycolsep}{2pt}
\begin{array}{|c|c|c|c|c|c|c|c|c|}
\multicolumn{9}{c}{A_0' (m=4)} \\
\hline
16 & 32 & & & & & & & -48 \\
\hline
-33 & -14 & 47 & & & & & &  \\
\hline
 & -18 & -13 & 31 & & & & &  \\
\hline
 &  & -34 & -12 & 46 & & & &  \\
\hline
 &  & & -19 & -11 & 30 & & &  \\
\hline
 &  & & & -35 & -10 & 45 & &  \\
\hline
 &  & &  & &  -20 & -9 & 29 &  \\
\hline
 &  & &  &  & & -36 & 15 & 21   \\
\hline
17 &  & &  &  &  &  & -44 & 27 \\
\hline
\end{array}$}
\end{center}

First observe that the array $A_0'$ is a $(4m-7)\times (4m-7)$ array that has $3$ filled cells in each row and column. 

To confirm that the row and columns sums are $0$, 
note that this array was constructed by taking the first $4m-8$ rows and columns  of the integer Heffter array $H(4m;3)$ given in \cite{ADDY}, then placing entry $-12m$ in cell $(1,4m-7)$, entry $4m+1$ in cell $(4m-7,1)$ and entry  $6m+3$ in cell $(4m-7,4m-7)$.  Thus we need only check  the sum of row $4m-7$ which is $(4m+1)+(6m+3)-(10m+4)=0$ and the sum of column $4m-7$ which is $-12m+(6m-3)+(6m+3)=0$. Hence each row and column in the array $A_0'$ sums to zero.

Although not necessary for the case $k=3$, for larger values of $k$ (see the following theorem) we map the rows and columns of $A_0'$ so that the filled cells are a subset of the union of diagonals $D_0\cup D_1\cup D_2\cup D_{n-2}\cup D_{n-1}$. This can be done by applying the mapping
\begin{eqnarray*}
i&\rightarrow &
\left\{
\begin{array}{ll}
 2i-1,&\mbox{ when }1\leq i\leq 2m-3,\\
8m-2i-12,&\mbox{when }2m-2\leq i\leq 4m-7,
\end{array}\right.
\end{eqnarray*}
 to the rows and column of $A_0'$. This does not change the row and column sum and the support is still  $\{1\dots,4m+3\}\setminus \{1,2,\dots,7,2m,6m-2,\dots,6m+2, 6m+4,10m-3,\dots, 10m+3,12m+1,12m+2,\dots,12m+9\}.$ 
We call this rearranged array $A_0$; see $H(19;3)$ above for $A_0$ when $m=4$.

\renewcommand{\tabcolsep}{3pt}
Next, let $A_1$ be:
\begin{center}
{\small $
\renewcommand{\arraycolsep}{2pt}
\begin{array}{|c|c|c|c|c|c|c|}
\multicolumn{7}{c}{A_1}\\
\hline
-(6m+1)	&6m	      &1	   &        &        &         &         \\		
\hline	
6m-2    &-(12m+2) &		   &6m+4    &        &         &         \\	
\hline		
3		&         &10m-3  &	    &-10m    &         &         \\	
\hline 		
	    &6m+2	  &        &6m-1    &	     &-(12m+1) &	     \\
\hline
        &         &-(10m-2)&        &10m+2   &		   &-4       \\
\hline
		&         &        &-(12m+3)&        &2m       &   10m+3 \\
\hline
		&         &        &        &-2      &	10m+1  &-(10m-1) \\
\hline
        \end{array}
$}
     \end{center}

It is easy to check that this array has row and column sum $0$ and support $\{1,2,3,4,2m,6m-2,\dots,6m+2, 6m+4,10m-3,\dots, 10m+3,12m+1,12m+2,12m+3\}$.
We place $A_1$ on the intersection of row and column sets $\{4m-6,4m-5,\dots ,4m\}$.

Finally, place the block $C$ on the intersection of the row and column sets $\{4m+1,4m+2,4m+3\}$.
Observe that the rows and columns sum to $2nk+1$. It is convenient to express $C$ in terms of $n$ and $k$, as it will be part of more general constructions in the next theorems. However, if $k=3$, observe that $\{12m+4,12m+5,\dots ,12m+9\}=\{nk-5,nk-4,\dots ,nk\}$.

\begin{center}
{\small $
\renewcommand{\arraycolsep}{2pt}
\begin{array}{|c|c|c|}
\multicolumn{3}{c}{C} \\
\hline
5          & nk    & nk-4 \\
\hline
nk-3       &  6    & nk-2 \\
\hline
nk-1       & nk-5  &   7\\
\hline
\end{array}
$}
\end{center}

Let $A$ be the concatenation of $A_0$, $A_1$ and $C$ to obtain an $H(4m+3;3)$ Heffter array for all $m\geq 4$. An $H(7;3)$ is given in the Appendix. When $n= 11$ or 15, concatenating the array $B(n)$ given in the Appendix for these size with $C$ will produce an $H(n;3)$ with similar properties and support.
\end{proof}

We now consider the case when $k>3$; we apply the techniques developed in Section 2. 

\begin{theorem}\label{m=4m+3k=4p+3} There exists a Heffter array $H(n;k)$ for all $n\equiv 3(\mbox{mod }4)$ and $k\equiv 3\ (\mbox{mod }4)$, where $n>k\geq 3$.
\end{theorem}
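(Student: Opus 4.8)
The plan is to reduce Theorem \ref{m=4m+3k=4p+3} to the base case $k=3$ (Lemma \ref{n=4m+3k=3}) by using the increase-by-four machinery from Section 2, exactly as was done for Case A in Theorem \ref{k=4t+6}. Write $k=4p+3$; since $k<n$ and both are $3 \pmod 4$ we have $4p+3\le n-4$, so $p$ ranges over the nonnegative integers with $4p\le n-7$. The case $p=0$ is precisely Lemma \ref{n=4m+3k=3}, so I may assume $p\ge 1$ and add four new filled cells to every row and column exactly $p$ times.

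The key structural fact is that the base array $A$ built in Lemma \ref{n=4m+3k=3} has all of its filled cells confined to a small set of diagonals. The $A_0$ block was deliberately rearranged so that its cells lie in $D_0\cup D_1\cup D_2\cup D_{n-2}\cup D_{n-1}$, and the blocks $A_1$ and $C$ occupy a bounded band of rows and columns near the bottom-right corner. So the first step is to verify that the filled cells of the base $H(n;3)$ avoid all of the ``middle'' diagonals $D_3,D_4,\dots,D_{n-3}$ except possibly for the few cells used by $A_1$ and $C$; I would check that these latter cells can be absorbed or that enough clean diagonals remain. Since there are $n-3$ such middle diagonals and $n=4m+3\equiv 3\pmod 4$, there are more than enough to extract $4p$ of them for each increment, provided they are genuinely disjoint from the base.

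Because the base $H(n;3)$ here is \emph{integer} on the $A_0$ and $A_1$ parts but the row and column sums of the whole array equal $2nk+1$ (coming from $C$), I cannot invoke Theorem \ref{crucial} verbatim, since that theorem's proof negates the base array and rebuilds via Lemma \ref{bacon} to force the sum $8n+2$ per row and column. The cleaner route is Lemma \ref{hamilton}: pick, for each of the $p$ rounds, two disjoint sets of cells $S_1$ and $S_2$ from the free middle diagonals, each forming a Hamilton cycle (recall $D_i\cup D_j$ is a Hamilton cycle whenever $j-i$ is coprime to $n$, and $n$ odd makes coprimality easy to arrange). Lemma \ref{hamilton} then fills $S_1\cup S_2$ with a shiftable array summing to $0$ in each row and column, using support $\{s+i,t+i\mid 1\le i\le 2n\}$ for freely chosen $s>t+2n$. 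Choosing the parameters $s,t$ in successive rounds so that the supports tile the interval $\{3n+1,3n+2,\dots,nk\}$ (the integers beyond the base support) keeps the overall support equal to $\{1,\dots,nk\}$, and because each added block is shiftable it contributes $0$ to every row and column sum, preserving the value $2nk+1$.

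The main obstacle I anticipate is the bookkeeping of \emph{which} diagonals are free and \emph{whether} the chosen pairs are simultaneously disjoint across all $p$ rounds while each pair still forms a Hamilton cycle; pairing consecutive diagonals $D_{3},D_{4}$, then $D_{5},D_{6}$, and so on mirrors the Case A argument, but I must confirm each pair $D_{2\ell+1}\cup D_{2\ell+2}$ is a single $2n$-cycle, i.e. that the difference $1$ works (it does, since $\gcd(1,n)=1$), and that none of these collide with the band occupied by $A_1$ and $C$. The secondary obstacle is arithmetic: verifying that the nested support intervals from the $p$ applications of Lemma \ref{hamilton} exactly fill the gap between the base support and $nk$ with no overlaps and no omissions, so that the final array is a genuine Heffter array with support $\{1,\dots,nk\}$. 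Once disjointness of diagonals and the telescoping of supports are checked, the conclusion that the result is an $H(n;k)$ with all row and column sums $2nk+1$ follows immediately, completing Case B.
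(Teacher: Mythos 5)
Your proposal follows essentially the same route as the paper: take the $k=3$ base array of Lemma \ref{n=4m+3k=3}, observe that its filled cells sit inside $D_0\cup D_1\cup D_2\cup D_{n-2}\cup D_{n-1}$, pair up $2p$ of the remaining $n-5$ diagonals into $p$ Hamilton cycles, and fill them via the shiftable construction of Lemma \ref{hamilton} rather than Theorem \ref{crucial} (you are right that Theorem \ref{crucial} does not apply, since the base array is not constant-sum $2nk+1$). Your worry about $A_1$ and $C$ meeting the middle diagonals is unfounded: both are band blocks of width at most two about the main diagonal of their row/column ranges, so they also lie in those five diagonals, exactly as the paper asserts.

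There is, however, one bookkeeping inconsistency you should repair. You propose to tile the interval $\{3n+1,\dots,nk\}$ with the supports of the Lemma \ref{hamilton} blocks, which presupposes that the base array has support $\{1,\dots,3n\}$; but then the block $C$ carries entries near $3n$ and its three rows and columns sum to $6n+1=2n\cdot 3+1$, and since every added shiftable block contributes $0$ to each row and column, those three rows and columns of the final array would sum to $6n+1\not\equiv 0\pmod{2nk+1}$ for $k>3$. The paper avoids this by parametrizing $C$ in terms of the \emph{target} $k$ from the outset: $C$ uses $\{5,6,7\}\cup\{nk-5,\dots,nk\}$, so its rows and columns sum to $2nk+1$, and the interval to be tiled by the $p$ applications of Lemma \ref{hamilton} is $\{12m+4,\dots,nk-6\}$ (of size exactly $4pn$). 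With that adjustment your argument goes through and coincides with the paper's proof.
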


\begin{proof} Given Lemma \ref{n=4m+3k=3} we only need to address the case $k=4p+3$ where $1\leq p<m$.
Since $k\leq n-4$, $p\leq (n-7)/4$.  
First observe that the filled cells of $H(n;3)$ given in the previous lemma are a subset the union of diagonals $D_0, D_1,D_2, D_{n-2}, D_{n-1}$ with support $\{1,2,\dots ,12m+3\}\cup\{nk-5,\dots ,nk\}$. 
We next identify $p$ disjoint Hamilton cycles, that are also disjoint from diagonals $D_0,D_1,D_2, $ $D_{n-2},D_{n-1}$,
 by pairing the remaining $(n-5)$ diagonals. 
%So for $k' \in \{1,\dots,  \lfloor \frac{n-5}{4}\rfloor\}$,  
%the diagonals $D_3,D_4,\dots ,D_{2+2k'-1},D_{2+2k'}$ 
%can be paired to give the set of cells $S_1$ and $D_{2+2k'+1},D_{2+2k'+2},\dots , D_{2+4k'-1},D_{2+4k'}$ paired to give %$S_2$.  
Then  we apply Lemma \ref{hamilton}, contributing $\{12m+4,\dots ,nk-6\}$ to the support. 
The result is a Heffter array $H(n;k)$ with each row and column sum equal to $0$ except for the final three rows and columns which sum to $2nk+1$.
\end{proof}

\section{Case C: $k\equiv 3$ (mod $4$) and $n\equiv 2$ (mod $4$)}

We construct a Heffter array $H(n;k)$ where $n=4m+6$ and $k=4p+3$ for $k<n$. 
%The cases $(n;3)=(6;3); (10;k), k=3,7;(14;k), k=3,7,11; (18;k),k=3,7,11,15;(22;k),k=3,7,11,15,19;(26;k),\break k=3,7,11,15,19,23$ are dealt with in the Appendix and henceforth 
First we  assume that $m\geq 7$.

As in the previous section,  we will begin with  $k=3$ and  construct $n\times n$ array which is the concatenation of several smaller arrays.  Here we use four smaller arrays, $A_0=A_0[r,c]$ of size $(4m-13)\times (4m-13)$, $A_1=A_1[r,c]$ of size $13\times 13$, $A_2=A_2[r,c]$ of size $3\times 3$ and $A_3=A_3[r,c]$ of size $3\times 3$, each containing $3$ filled cells per row and column. The sum of the rows and columns in $A_0$, $A_1$ and $A_2$ is $0$, while the sum of the  rows and columns in $A_3$ is $2nk+1$.

\begin{lemma}\label{n=4m+2k=3} For $n\equiv 2\ (\mbox{mod }4)$, $n\geq 34$ and $k=3$ there exists a Heffter array $H(n;3)$.
\end{lemma}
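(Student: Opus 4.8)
The plan is to mimic the structure of Lemma \ref{n=4m+3k=3} from the previous section: I would build the $n \times n$ array (with $n = 4m+6$) as a concatenation of block arrays placed on disjoint sets of rows and columns, most of which sum to $0$, with a single $3 \times 3$ block carrying the nonzero target sum $2nk+1$. The section preamble has already announced the decomposition I should aim for: $A_0$ of size $(4m-13)\times(4m-13)$, $A_1$ of size $13\times 13$, $A_2$ of size $3\times 3$, and $A_3$ of size $3\times 3$, where the first three blocks have zero row and column sums and $A_3$ has row and column sums equal to $2nk+1 = 6n+1 = 24m+37$. My first step would be to produce the large block $A_0$ by adapting a known integer Heffter array $H(4m;3)$ (as was done in the $n \equiv 3$ case), truncating it to size $(4m-13)\times(4m-13)$ and repairing the boundary row/column with three corrective entries so that all its row and column sums remain $0$; I would verify this by checking only the two modified lines, exactly as in the earlier lemma.

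The second step is to exhibit the small blocks $A_1$ and $A_2$ explicitly as integer matrices (with entries written in terms of $m$), chosen so that the union of the supports of $A_0, A_1, A_2, A_3$ is precisely $\{1,2,\dots,nk\} = \{1,2,\dots,12m+18\}$ with each value appearing exactly once in absolute value. The bookkeeping here is to partition $\{1,\dots,12m+18\}$ into the four support sets: $A_0$ uses a generic run of values with finitely many small/boundary exceptions, and those excepted values are exactly what $A_1$, $A_2$, and $A_3$ must absorb. I would state $A_1$ and $A_2$ as displayed arrays and assert (leaving the routine addition to the reader) that each has row and column sums $0$, and that $A_3$ has the desired constant sum $2nk+1$; this mirrors the $C$-block computation in Lemma \ref{n=4m+3k=3}. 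I would also, as before, apply a row/column relabeling to $A_0$ so that its filled cells lie within a small fixed set of diagonals $D_0, D_1, D_2, D_{n-2}, D_{n-1}$, since this diagonal structure is what makes the subsequent $k>3$ extension (via Lemma \ref{hamilton} and pairing the remaining diagonals into Hamilton cycles) go through.

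The main obstacle I expect is the support accounting, not the structure. Because $n \equiv 2 \pmod 4$, the base integer array $H(4m;3)$ I borrow from need not have all the convenient parity/divisibility features available in the $n\equiv 3$ case, so the finitely many exceptional values that $A_0$ cannot cover must be matched \emph{exactly} by the entries I place in the $13\times 13$ block $A_1$ and the $3\times 3$ blocks $A_2, A_3$. Getting a single $3\times 3$ block $A_3$ to simultaneously (i) use the correct leftover support values, (ii) have all three row sums and all three column sums equal to $2nk+1$, and (iii) be compatible with the $0$-sum blocks, is delicate; this is precisely why the construction needs the intermediate $13\times 13$ block rather than a second small block. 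I would handle this by first fixing $A_3$ analogously to the block $C$ (forcing its six entries from the top of the support range $\{nk-5,\dots,nk\}$ together with small values), then designing $A_1, A_2$ to soak up whatever the $A_0$ truncation left over, checking the sums line by line.

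Finally, the lemma as stated requires $n \geq 34$, i.e.\ $m \geq 7$, so the generic construction need only be argued for $m \geq 7$; the small cases $n \in \{6,10,14,18,22,26,30\}$ with $n\equiv 2\pmod 4$ that fall below the threshold would be handled by hand (presumably by explicit arrays deferred to an Appendix, as the $n\equiv 3$ lemma did for $n=7,11,15$). I would close by asserting that the concatenation of $A_0, A_1, A_2, A_3$ on their respective disjoint row/column index sets yields an $H(4m+6;3)$ with the claimed support and with all row/column sums equal to $2nk+1$.
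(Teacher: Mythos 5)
Your plan is essentially identical to the paper's proof: the paper likewise truncates the integer Heffter array $H(4m;3)$ of \cite{ADDY} to a $(4m-13)\times(4m-13)$ block, repairs it with three boundary entries ($-12m$, $4m+1$, $6m+6$), checks only the modified row and column, remaps it onto the diagonals $D_0,D_1,D_2,D_{n-2},D_{n-1}$, and then absorbs the $57$ leftover symbols into an explicit $13\times 13$ zero-sum block $A_1$ and two $3\times 3$ blocks $A_2$ (zero-sum) and $A_3$ (sum $2nk+1$, built from $\{11,12,13\}\cup\{nk-8,\dots,nk-3\}$ in the style of the block $C$). The only substantive thing your write-up defers is the actual exhibition of $A_1$, $A_2$, $A_3$ — which is where all the remaining work lies — but the paper confirms such blocks exist and your outline of how to find them (fix $A_3$ first from the top of the support range, then design $A_1,A_2$ to absorb the rest) matches what was done.
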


\begin{proof}
Let $n=4m+6$ and $m\geq 7$. 
Let $A_0'$ be:
$$
\renewcommand{\arraycolsep}{3pt}
\begin{array}{lll}
A_0'[2i-1,2i] = 8m+1-i,  \qquad      A_0'[2i,2i-1] = -(8m+i),      &      & 1\leq i\leq m,\\
A_0'[2i,2i+1] = 12m-i,  \quad \qquad \   A_0'[2i+1,2i]=-(4m+1+i),      &      & 1\leq i\leq m-1,\\
A_0'[2m-2+2i,2m-1+2i] = 5m+i, &   & 1\leq i\leq m-6,\\
 A_0'[2m-1+2i,2m-2+2i] = -(11m+1-i), & & 1\leq i\leq m-6,\\
A_0'[2m-1+2i,2m+2i] = 9m+i,   &    & 1\leq i\leq m-7,\\
 A_0'[2m+2i,2m-1+2i] = -(7m+1-i), &   & 1\leq i\leq m-7,\\
A_0'[i+1,i+1] = -(4m-1-i),        &                               & 1\leq i\leq 2m-2, \\
A_0'[2m+i,2m+i] = 2m-i,        &                                  & 1\leq i\leq 2m-14, \\
\quad A_0'[2m,2m] = 4m-1,   \qquad        A_0'[1,4m-13] = -(12m),&  & A_0'[1,1] = 4m,  \\
     A_0'[4m-13,1] = 4m+1, 
\quad A_0'[4m-13,4m-13]=6m+6. & &
\end{array}
$$

We illustrate $A_0'$ when $m=7$:
\begin{center}
{\small $
\renewcommand{\arraycolsep}{.1 pt}
\begin{array}{|c|c|c|c|c|c|c|c|c|c|c|c|c|c|c|}
\multicolumn{15}{c}{A_0' (m=7)}\\
\hline
28	   &56   & &        &        &        &        &        &        &        &        &        & & &-84 \\ \hline	
-57   & -26      &83	     & &        &        &        &        &        &        &        &       & & &\\ \hline				
	   &-30&-25  & 55       &        &      &	     &        &        &        &        &  &     & &\\ \hline
       &        &     -58       & -24       &82  && &       &        &        &        &        & &&\\ \hline
	   &        &        &-31  &-23 &  54      &        & &        &        &        &       & &&\\ \hline
	   &        &        &        &-59 &  -22      &  81      & &  &        &        &       & &&\\ \hline
       &        &        &        &        &-32&-21      & 53       &        &     &        &       & &&\\ \hline
       &        &        &        &        &        &-60  & -20       &  80      & &      &        &&&\\ \hline
       &        &        &        &        &        &        &-33  &-19     &  52      &        & &&&\\ \hline
       &        &        &        &        &        &        &        &-61 &  -18      &  79      & &&&\\ \hline
       &        &        &        &        &        &        &        &        &-34       &-17   & 51  &    & & \\ \hline
       &        &        &        &        &        &        &        &        &        &-62&-16   &78  & &\\ \hline
   &        &        &        &        &        &        &        &        &        & &-35   &-15  & 50 &\\ \hline      
 &        &        &        &        &        &        &        &        &        & &   &-63  & 27 &  36 \\ \hline    
29 &        &        &        &        &        &        &        &        &        & &  &  & -77 & 48 \\ \hline          
\end{array}$ }
\end{center}

Observe that the array $A_0'$ is a $(4m-13)\times (4m-13)$ array that has $3$ filled cells in each row and column.
Similarly to the previous case, this array was constructed by taking the first $4m-14$ rows and columns  of the integer Heffter array $H(4m;3)$ given in \cite{ADDY}, then placing entry $-12m$ in cell $(1,4m-13)$, entry $4m+1$ in cell $(4m-13,1)$ and entry  $6m+6$ in cell $(4m-13,4m-13)$.  Thus we need only check  the sum of row $4m-13$ which is $(4m+1)+(6m+6)-(10m+7)=0$ and the sum of column $4m-13$ which is $-12m+(6m-6)+(6m+6)=0$. Hence all rows and columns in the array $A_0'$ sum to zero. We apply the same mapping as in the proof of Lemma \ref{n=4m+3k=3} to the rows and columns so that all non-empty cells  are a subset of the diagonals $D_0, D_1, D_2, D_{n-2}, D_{n-1}$. Let $A_0$ be the resultant array. The support for $A_0$ is $\{1,\dots, 12m+8\}\setminus \{1,\dots,13,2m,6m-5,\dots,6m+5,6m+7,10m-6,\dots,10m+6,12m+1,12m+2,\dots,12m+18\}$.

 We now arrange the $57$ missing symbols into a $13\times13$ array $A_1$ and two  $3\times 3$ arrays $A_2$  and $A_3$.
Here is $A_1$:
\begin{center}
{\tiny $
\renewcommand{\arraycolsep}{.1 pt}
\begin{array}{|c|c|c|c|c|c|c|c|c|c|c|c|c|c|c|}
\multicolumn{13}{c}{A_1}\\
\hline
1	   &10m-4   &-(10m-3)&        &        &        &        &        &        &        &        &        &\\ \hline	
6m-5   & 2      &	     &-(6m-3) &        &        &        &        &        &        &        &        &\\ \hline		
-(6m-4)&        &        &-(6m+7)&12m+3   &        &        &        &        &        &        &        &\\ \hline 		
	   &-(10m-2)&10m+4   &        &        &-6      &	     &        &        &        &        &        &\\ \hline
       &        &-7      &        &        &10m+1   &-(10m-6)&        &        &        &        &        &\\ \hline
	   &        &        &12m+4   &-(6m+5) &        &        &-(6m-1) &        &        &        &        &\\ \hline
	   &        &        &        &-(6m-2) &        &        &-(6m+3) &12m+1   &        &        &        &\\ \hline
       &        &        &        &        &-(10m-5)&-5      &        &        &10m     &        &        &\\ \hline
       &        &        &        &        &        &10m-1   &        &        &-(10m+3)&4       &        &\\ \hline
       &        &        &        &        &        &        &12m+2   &-6m     &        &        &-(6m+2) &\\ \hline
       &        &        &        &        &        &        &        &-(6m+1) &        &        &-(6m+4) &12m+5\\ \hline
       &        &        &        &        &        &        &        &        &3       &10m+2   &        &-(10m+5)\\ \hline
       &        &        &        &        &        &        &        &        &        &-(10m+6)&12m+6   &-2m\\ \hline
        \end{array}$ }
\end{center}

\enlargethispage{3\baselineskip}

%(This array is given as a table in the Appendix for easier checking.)

The support for $A_1$ is $\{1,\dots,7,2m,6m-5,\dots,6m+5,6m+7,10m-6,\dots,10m+6,12m+1,12m+2,\dots,12m+6\}$.
Finally we give $A_2$ and $A_3$ with support $\{8,9,10,11,12,13\}\cup 
\{nk-11,\dots ,nk\}$. In the case $k=3$, observe that $\{nk-11,\dots ,nk\} =\{12m+7,\dots,12m+18\}$. 
\begin{center}
{\small $
\renewcommand{\arraycolsep}{2pt}
\begin{array}{|c|c|c|}
\multicolumn{3}{c}{A_2} \\
\hline
-8	            &nk-2   &-(nk-10) \\ \hline	
-(nk-9)&-9               &nk \\ \hline		
nk-1  &-(nk-11) &-10               \\ \hline 	 	
\end{array}\quad \quad \quad
\begin{array}{|c|c|c|}
\multicolumn{3}{c}{A_3} \\
\hline
11                   & nk-3 & nk-7\\
\hline
nk-6       &             12 & nk-5\\
\hline
nk-4       & nk-8 &             13\\
\hline
\end{array}$}
\end{center}
%With the proof following as in that of Lemma \ref{n=4m+3k=3}.
\end{proof}

%The rest follows as in the previous case and so we have:

\begin{theorem}\label{n=4m+2k=4p+3} There exists a Heffter array $H(n;k)$ for all $n\equiv 2(\mbox{mod }4)$ and $k\equiv 3\ (\mbox{mod }4)$, where $n>k\geq 3$.
\end{theorem}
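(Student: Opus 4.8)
The plan is to mimic exactly the structure used in Theorem~\ref{m=4m+3k=4p+3} for Case~B, now bootstrapping from the base case $k=3$ established in Lemma~\ref{n=4m+2k=3}. Since that lemma handles $k=3$, I only need to treat $k=4p+3$ with $p\geq 1$. The constraint $k\leq n-4$ (because $k<n$ and both $k,n$ have fixed residues forcing $n-k\geq 3$, and in fact the parity/mod-4 conditions give $n-k\equiv 3\pmod 4$) translates into a bound on $p$, namely $p\leq (n-7)/4$; I would record this at the outset so that the diagonal-pairing argument has enough room.

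The key observation, stated explicitly in Lemma~\ref{n=4m+2k=3}, is that the filled cells of the base array $H(n;3)$ lie entirely within the five diagonals $D_0, D_1, D_2, D_{n-2}, D_{n-1}$, with support $\{1,2,\dots,12m+8\}\cup\{nk-11,\dots,nk\}$ (the precise low/high support being what matters). This leaves $n-5$ diagonals $D_3,D_4,\dots,D_{n-3}$ completely unused. Since $n\equiv 2\pmod 4$, $n$ is even, so I must be slightly careful: a pair of diagonals $D_i\cup D_j$ forms a Hamilton cycle exactly when $j-i$ is coprime to $n$, and with $n$ even I cannot simply pair consecutive diagonals (difference $1$ works, but I need $p$ \emph{disjoint} pairs covering $4p$ of the free diagonals). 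I would pair them so that each pair has difference coprime to $n$, for instance pairing $D_3$ with $D_4$, $D_5$ with $D_6$, and so on, each consecutive pair having difference $1$, which is always coprime to $n$; this yields $2p$ disjoint Hamilton-cycle pairs grouped into two sets $S_1,S_2$ of cells, exactly as in the Case~B proof.

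Having produced these Hamilton cycles, I would invoke Lemma~\ref{hamilton} to fill them: for each application it contributes a shiftable block of support $\{s+i,t+i\mid 1\leq i\leq 2n\}$ with zero row and column sums, and by choosing the shift parameters $s,t$ recursively I can make the newly added supports tile the interval $\{12m+9,\dots,nk-12\}$ without gaps or overlaps, matching the gap left by the base case. The zero-sum property of each added block means the row and column sums are unchanged, so the final three rows and columns (carrying the $A_3$ block) still sum to $2nk+1$ while all other rows and columns sum to $0$; the support becomes the full set $\{1,2,\dots,nk\}$.

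The main obstacle I expect is the bookkeeping around the even modulus $n\equiv 2\pmod 4$: I must verify both that the chosen diagonal pairs genuinely form Hamilton cycles (coprimality of the differences with $n$) and that they are disjoint from the five occupied diagonals, and then confirm that the support intervals supplied by the successive applications of Lemma~\ref{hamilton} exactly fill the complementary interval left by Lemma~\ref{n=4m+2k=3}. The arithmetic of matching endpoints $12m+9$ up to $nk-12$ across $p$ blocks of size $4n$ each is routine but must be checked to close the argument, so I would state the recursive shift choice explicitly and then simply assert that the resulting array is the desired $H(n;k)$.
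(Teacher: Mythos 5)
Your overall strategy for $n\geq 34$ matches the paper's: start from the $k=3$ array of Lemma~\ref{n=4m+2k=3}, whose filled cells lie on $D_0,D_1,D_2,D_{n-2},D_{n-1}$; pair the remaining diagonals consecutively (difference $1$ is coprime to $n$ even though $n$ is even); and apply Lemma~\ref{hamilton} with shifts chosen to tile the missing support interval. The genuine gap is coverage of small orders: Lemma~\ref{n=4m+2k=3} is stated and proved only for $n\geq 34$ (equivalently $m\geq 7$, since the block $A_0'$ has dimension $4m-13$), so your argument says nothing about $n\in\{6,10,14,18,22,26,30\}$, all of which are admissible under $n>k\geq 3$. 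The paper closes these cases with explicit arrays in the Appendix: $H(6;3)$, the blocks $B(10),B(14),B(18),B(22),B(26)$ concatenated with the $3\times 3$ block $C$ from Case~B, and a bespoke $H(30;3)$. Moreover, the base arrays for $n=26$ and $n=30$ do \emph{not} sit only on the five standard diagonals (for $n=26$ the cells also occupy $D_9$; for $n=30$ they occupy $D_3$, $D_{11}$, $D_{18}$ and $D_{27}$ as well), so your uniform pairing ``$D_3$ with $D_4$, $D_5$ with $D_6$, \dots'' would collide with filled cells there; the paper gives ad hoc pairings for those two orders. Without some treatment of these seven orders the theorem as stated is not proved.

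A smaller but real slip: since $n\equiv 2$ and $k\equiv 3\pmod 4$ we have $n-k\equiv 3\pmod 4$, so the extremal case is $k=n-3$, not $k\leq n-4$; with $n=4m+6$ this is $p=m$, which your stated range $p\leq (n-7)/4$ excludes. The construction does still work at $p=m$, because only $4p=4m$ free diagonals are needed and $n-5=4m+1$ are available, but as written your proof omits the largest admissible $k$ for every $n$. Finally, the base support is $\{1,\dots,12m+6\}\cup\{nk-11,\dots,nk\}$ rather than $\{1,\dots,12m+8\}\cup\{nk-11,\dots,nk\}$, so the gap to be tiled is $\{12m+7,\dots,nk-12\}$, of size exactly $n(k-3)=4pn$; this is the endpoint arithmetic you flagged as needing verification, and with the corrected endpoints the $p$ applications of Lemma~\ref{hamilton} do close up exactly.
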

\begin{proof}
An $H(6;3)$ is given in the Appendix. Otherwise let $n=4m+6$ where $m\geq 1$. When $1\leq m\leq 6$ concatenate the $B(4m+6)$ given in the Appendix with the array $C$ given in Case B to get an $H(4m+6;3)$. Observe that when $1\leq m\leq 4$ the entries are only on diagonals $D_0, D_1, D_2, D_{n-2}, D_{n-1}$ as before. When $n=26$ the entries are on diagonals $D_0, D_1, D_2, D_{24}, D_{25}$ and $D_{9}$. We pair up the rest of the diagonals as $\{D_{2i},D_{2i+1}\}$ for $5\leq i\leq 11$, and  $\{D_{3},D_{4}\}$,  $\{D_{5},D_{6}\}$, $\{D_{7},D_{8}\}$ to get the required Hamilton cycles. When $n=30$, an $H(30;3)$ is given in the Appendix. Two Hamilton cycles $H$ and $K$ are also given as a reference on the array. These Hamilton cycles together with $H(30;3)$ only have entries on diagonals $D_0, D_1, D_2, D_3, D_{27}, D_{28}, D_{29}, D_{18}$ and $D_{11}$. For the rest of the diagonals,  pair them up as  $\{D_4,D_5\}$, $\{D_6,D_7\}$, $\{D_8,D_9\}$, $\{D_{12},D_{13}\}$, $\{D_{14},D_{15}\}$, $\{D_{16},D_{17}\}$ and $\{D_{2i-1},D_{2i}\}$ for $10\leq i\leq 13$ to get the necessary Hamilton cycles.

When $m\geq7$, concatenation of four smaller arrays, $A_0$, $A_1$, $A_2$ and $A_3$ gives the required $H(n;3)$ and the proof follows as in Lemma \ref{n=4m+3k=3}.
\end{proof}

\section{Case D: $k\equiv 1$ (mod $4$) and $n\equiv 1$ (mod $4$)}

In this section we construct a Heffter array $H(n;k)$ where $n=4m+1$ and $k=4p+1$, with $k<n$ and hence $m\geq 2$. The case $H(9;5)$ is given in the Appendix and so henceforth we assume $m\geq 3$.

We begin with $k=5$ construct an $n\times n$ array for which  the sum of each  row and column is $2nk+1=40m+11$.
First we give an example $H(17;5)$ of our construction. (Note a Hamilton cycle $H$ has been included for the case $k>5$).

\begin{center}
{\small $
\renewcommand{\arraycolsep}{2pt}
\begin{array}{|c|c|c|c|c|c|c|c|c|c|c|c|c|c|c|c|c|}
\multicolumn{15}{c}{H(17;5)}\\
\hline
85 & $-$27 &   & 11 &   &   &   & 50 &   & H  & H  &   &   &   &   &   & 52 \\ \hline
68 & 84 & $-$28 &   & 12 &   &   &   & 35 &   & H  & H  &   &   &   &   &   \\ \hline
 & 67 & 83 & $-$29 &   & 13 &   &   &   &   &   & H  & H  &   &   & 37 &   \\ \hline
 &   & 66 & 82 & $-$30 &   & 14 &   &   &   & 39 &   & H  & H  &   &   &   \\ \hline
 &   &   & 65 & 81 & $-$31 &   & 15 &   &   &   & 41 &  & H & H  &   &   \\ \hline
 &   &   &   & 64 & 80 & $-$32 &   &   &   &   &   & 43 &   & H  & 16 & H \\ \hline
 &   &   &   &   & 63 & 79 & $-$33 &   & 17 &   &   &   & 45 &   & H  &   H \\ \hline
H & H  &   &   &   &   & 62 & 78 &   & $-$34 & 18 &   &   &   & 47 &   &   \\ \hline
$-$20 & H  & H  &   &   &   &   & 61 & 60 &   &   & 21 &   &   &   & 49 &   \\ \hline
 &   &  H & H  &   &   &   &   & 77 & 59 & $-$19 &   & 3 &   &   &   & 51 \\ \hline
36 &   &   & H  & H  &   &   &   &   & 76 & 58 & $-$22 &   & 23 &   &   &   \\ \hline
 & 38 &   &   & H  & H  &   &   &   &   & 75 & 57 & $-$4 &   & 5 &   &   \\ \hline
 &   & 40 &   &   & H  &H   &   & 25 &   &   & 74 & 56 & $-$24 &   &   &   \\ \hline
 &   &   & 42 &   &   & H  & H  &   &   &   &   & 73 & 55 & $-$6 &   & 7 \\ \hline
2 &   &   &   & 44 &   &   & H  & H  &   &   &   &   & 72 & 54 & $-$1 &   \\ \hline
 & 9 &   &   &   & 46 &   &   & H  & 53 &   &   &   &   & 71 &  H & $-$8 \\ \hline
H &   & 10 &   &   &   & 48 &   & $-$26 & H  &   &   &   &   &   & 70 & 69 \\ \hline
\end{array}
$}
\end{center}

\begin{lemma}\label{n=4m+1k=5} For $n\equiv 1(\mbox{mod }4)$, $n\geq 13$ and $k=5$ there exists a Heffter array $H(n;5)$.
\end{lemma}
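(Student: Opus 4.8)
\textbf{Proof plan for Lemma~\ref{n=4m+1k=5}.}
The plan is to mirror the explicit-construction strategy used in the earlier base cases (Lemmas~\ref{k=6}, \ref{n=4m+3k=3}, \ref{n=4m+2k=3}), giving closed-form formulas $A[r,c]$ for the entries, indexed by a parameter $i$ running over the rows, with rows and columns read modulo $n=4m+1$. The target constant row/column sum is $2nk+1 = 40m+11$, which is \emph{nonzero}, so unlike Case~B/C this is a genuinely non-integer array: I do not need the entries of each line to cancel, only to hit the fixed sum $40m+11$. Guided by the displayed example $H(17;5)$, I would place the five entries of each row along a controlled set of diagonals --- the example suggests a main band near $D_0$ together with a small number of ``wrap-around'' diagonals near $D_{n-1}$ and one or two far diagonals --- and then exhibit the support as a partition of $\{1,2,\dots,5n\}$.

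First I would fix the diagonal pattern: from the example the filled cells lie on $D_0, D_1$, two diagonals close to $D_{n-1}$ (for the $-$entries and their partners), plus the diagonals carrying the columns labelled $H$, chosen so that the unused diagonals form a single Hamilton cycle for later use in Theorem~\ref{k=4t+6}-style lifting. Second, I would write down six families of entry formulas (five cells per row forces essentially five linear families plus $O(1)$ boundary corrections), arranged so that the positive entries sweep the large half $\{2n+1,\dots,5n\}$ of the support in arithmetic progressions while the negated entries and the small progressions fill $\{1,\dots,2n\}$; the single negative sign per row (as in the $-27,-28,\dots$ column of the example) is what lets a sum of five \emph{mostly large} positive numbers land on the modest value $40m+11$. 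Third, I would verify the row sums: substitute the formulas into each generic row, collect the $i$-dependent terms, and check they telescope to $40m+11$ identically in $i$; then repeat for columns. Fourth, I would handle the boundary rows/columns (the first one or two and last one or two, where the wrap-around modifies the generic pattern) as separate explicit cases, exactly as the base lemmas isolate rows $1,2$ and columns $n-4,\dots,n$.

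The main obstacle I anticipate is \textbf{bookkeeping the support}: proving that the multiset of absolute values of all $5n$ entries is exactly $\{1,2,\dots,5n\}$ with no repeats and no gaps. Because the construction is assembled from several interleaved arithmetic progressions (even/odd residues, one band shifted by $n$, small ``patch'' values filling the low end), the danger is an off-by-one collision or a missed residue class, and this is precisely where the small correction terms in the boundary cells must be tuned. My approach would be to organize the support into disjoint arithmetic progressions keyed by residue mod~$4$ (since $n\equiv 1\pmod 4$), show each progression is an interval of the appropriate arithmetic type, and confirm the intervals tile $\{1,\dots,5n\}$; the handful of irregular values in the boundary cells would be listed explicitly and checked to plug the remaining gaps. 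The row/column sum verification, by contrast, I expect to be routine once the formulas are pinned down, since each is a short linear identity in $i$ and $m$.

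Finally, as in Theorem~\ref{k=4t+6}, I would note for use in the subsequent Case~D theorem that the construction leaves diagonals $D_7,\dots,D_{n-1}$ (or whatever complement the chosen pattern yields) free and that the indicated Hamilton cycle~$H$ is disjoint from the filled cells, so that Theorem~\ref{crucial} can be applied recursively to raise $k$ by multiples of $4$; the small cases $n<13$ (e.g.\ $H(9;5)$) are deferred to the Appendix.
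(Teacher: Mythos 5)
Your plan matches the paper's strategy for this lemma almost exactly: an explicit cell-by-cell construction supported on a small fixed set of diagonals (the paper uses eleven of them, $D_{n-3},D_{n-2},D_{n-1},D_0,D_1,D_4,D_{2m-4},D_{2m-2},D_{2m-1},D_{2m},D_{2m+2}$), with exactly one negative entry per row and column so that five mostly-large entries can sum to $40m+11$; a support check organized as a union of arithmetic progressions tiling $\{1,\dots,20m+5\}$; a telescoping verification of the generic row and column sums; individual checks of the boundary lines; and a record of which cells remain free so that Theorem~\ref{crucial} can later be applied. You even correctly identify that the support bookkeeping is where the real difficulty lies, and that the displayed Hamilton cycle $H$ must be kept disjoint from the filled cells for the Case~D lifting.

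The genuine gap is that the construction itself is never produced, and for a lemma of this type the construction \emph{is} the proof. The nontrivial content is exhibiting entry formulas that simultaneously (i) give every row and column sum equal to $40m+11$, (ii) have support exactly $\{1,\dots,20m+5\}$ with no collisions, and (iii) occupy a diagonal pattern leaving enough room for the later recursion; the paper needs twelve parametrized families of cells plus seventeen individually specified boundary cells to achieve this, rather than the ``five linear families plus $O(1)$ corrections'' you estimate, and the interplay between the sum constraint and the support constraint is exactly what forces that complexity. Saying that one \emph{would} write down such formulas and \emph{would} verify that the progressions tile the support does not establish that such formulas exist; until they are displayed and the two verifications are carried out, the lemma is not proved. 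Everything else in your outline is sound scaffolding around this missing core.
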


\begin{proof}
Let $n=4m+1$ for $m\geq 3$.  
In every case here row and column sums will equal $40m+11$.

We give the general construction  of $A$ below. %The rows and columns are indexed $1,2,\dots, n$.
\begin{eqnarray*}
\begin{array}{l}
A[4m-1,4m]=-1, \quad   A[4m-1,1]=2,    \quad    A[4m,2]=2m+1, \quad 
A[4m+1,3]=2m+2,  \\  A[2m-2,4m]=4m,   \quad         A[2m-1,2m+2]=4m+1,
\quad A[2m,2m+3]=4m+2,  \\   A[2m+2,2m+3]=-(4m+3),   \quad  A[2m+1,1]=-(4m+4),
\quad A[4m+1,2m+1]=-(6m+2), \\  A[4m-3,2m+1]=6m+1,       \quad A[2m,2m+2]=-(8m+2), 
\quad A[2,2m+1]=8m+3,    \\   A[3,4m]=8m+5,            \quad A[1,2m]=12m+2,
\quad A[1,4m+1]=12m+4,     \\  A[4m,2m+2]=12m+5,        \quad A[4m+1,4m+1]=16m+5,  \\
\end{array}\end{eqnarray*}

\vspace{-.2in}\begin{center}
\begin{eqnarray}
A[i,i+3]=            2m+i+2,          &&  1\leq i\leq 2m-3, \label{k=5eq1}\\
A[i+1,i]=            16m+5-i,         &&  1\leq i\leq 2m, \label{k=5eq2}\\
A[i,i]=              20m+6-i,         &&  1\leq i\leq 2m,  \label{k=5eq3}\\
A[2m-i,2m+1-i]=     -(8m+2-i),        &&  1\leq i\leq 2m-1, \label{k=5eq4}\\
A[2m+3-i,4m+2-i]=    12m+5-2i,        &&   1\leq i\leq 2m-1,\label{k=5eq5} \\
A[2m+i,2m+i]=        14m+5-i,         &&   1\leq i\leq 2m-1,\label{k=5eq6} \\
A[2m+1+i,2m+i]=      18m+6-i,         &&  1\leq i\leq 2m, \label{k=5eq7}\\
A[4m+2-i,2m-i]=      12m+2-2i,        &&  1\leq i\leq 2m-1, \label{k=5eq8}\\
A[2m+2i,2m+2i+3]=    2i +1,           &&   1\leq i\leq m-1, \label{k=5eq9}\\
A[2m+2i+2,2m+2i+3]=   -(2i+2),        &&  1\leq i\leq m-1, \label{k=5eq10} \\
A[2m+2i-1,2m+2i+2]=   4m+2i+3,        &&  1\leq i\leq m-2, \label{k=5eq11}\\
A[2m+2i+1,2m+2i+2]=  -(4m+2i+4),      &&   1\leq i\leq m-2.\label{k=5eq12}
\end{eqnarray} 
\end{center}
We note that  the support of $A$ contains %$1,2$, 
$3,\dots, 2m$ by \eqref{k=5eq9} and \eqref{k=5eq10}, 
%$2m+1,2m+2$, 
$2m+3,\dots, 4m-1$ by  \eqref{k=5eq1},  
%$4m,\dots, 4m+4$, 
$4m+5,\dots, 6m$ by \eqref{k=5eq11} and \eqref{k=5eq12}, 
%$6m+1,6m+2$, 
$6m+3,\dots, 8m+1$ by \eqref{k=5eq4}, 
%$8m+2,8m+3,8m+5$, 
$8m+4,8m+6,8m+7,\dots,12m,12m+1,12m+3$ by \eqref{k=5eq5} and \eqref{k=5eq8}, 
%$12m+4,12m+5$, 
$12m+6,\dots, 16m+4$ by \eqref{k=5eq6} and \eqref{k=5eq2}, 
%$16m+5$, 
$16m+6,\dots,20m+5$ by \eqref{k=5eq7} and \eqref{k=5eq3}. It follows that the support of $A$ is $\{1,\dots, 20m+5\}$ as required.

To verify the sum of each row and column is $40m+11$ we begin by noting that,  respectively, \eqref{k=5eq1}, \eqref{k=5eq2}, \eqref{k=5eq3}, \eqref{k=5eq4} and \eqref{k=5eq5} give the  sum for row $r$ (where $4\leq r\leq 2m-3$) and \eqref{k=5eq1}, \eqref{k=5eq2}, \eqref{k=5eq3}, \eqref{k=5eq4} and \eqref{k=5eq8}  give the sum for column $c$ (where $4\leq c\leq 2m-1$) as:
%\begin{eqnarray*}
\begin{center}$
(2m+r+2)+  (16m+6-r)+      (20m+6-r)      -(6m+r+2)    + (8m-1+2r)       =40m+11$
\end{center} and
\begin{center}$
(2m+c-1)+    (16m+5-c)+             (20m+6-c)      -(6m+1+c)  + (8m+2+2c)    =40m+11.
$\end{center}
%\end{eqnarray*}

For row $2m+r$, where $3\leq r\leq 2m-4$ (or $r=2m-2$) and column $2m+c$, where $4\leq c\leq 2m-1$, we argue as follows.

Respectively  \eqref{k=5eq6}, \eqref{k=5eq7} and \eqref{k=5eq8} give a partial sum of $40m+10$  for  row $r$ and \eqref{k=5eq5}, \eqref{k=5eq6} and \eqref{k=5eq7} give a partial sum of $40m+12$ for column $c$:
\begin{eqnarray}
(14m+5-r)  +      (18m+7-r)+         (8m-2+2r)           &=&40m+10,\label{row}\\
(8m+1+2c)+ (14m+5-c)+           (18m+6-c)                 &=&40m+12.\label{col}
\end{eqnarray}

Next \eqref{k=5eq9} and  \eqref{k=5eq10} imply that if $r$ is even,  
row $2m+r$ contain the entries $r+1$ and $-r$ giving a partial sum of $1$. If $r$ is odd, \eqref{k=5eq11} and  \eqref{k=5eq12} imply that row $2m+r$ contains the entries $4m+r+4$ and $-(4m+r+3)$ also giving a partial sum of $1$. Adding this to the partial sum in \eqref{row} we get an overall row sum of $40m+11$.
Then \eqref{k=5eq9} and  \eqref{k=5eq10} imply that if $c$ is odd, column $2m+c$ contain the entries $c-2$ and $-(c-1)$ giving a partial sum of $-1$. If $c$ is even, \eqref{k=5eq11} and  \eqref{k=5eq12} imply that column $2m+c$ contain the entries $4m+c+1$ and $-(4m+c+2)$ also giving a partial sum of $-1$. Adding this to the partial sum in \eqref{col} we get a column sum of $40m+11$.

The remaining rows
% $r\in\{1,2,3,2m-2,2m-1,2m,2m+1,2m+2, 4m-3,4m-1,4m,4m+1\}$ 
and columns 
%$c\in\{1,2,3,2m,2m+1,2m+2,2m+3,4m,4m+14\}$ 
can be checked individually to complete the proof that all rows and columns sum to $40m+11$.
Thus we have the required $H(4m+1;5)$.
\end{proof}

Next, with care, we add up to $2(m-2)$ Hamilton cycles to obtain an $H(4m+1;4p+5)$ where $p\leq m-2$.

\begin{theorem}\label{n=4m+1k=4p+5} For $n\equiv 1(\mbox{mod }4)$ and $k\equiv 1(\mbox{mod }4)$ there exists a Heffter array $H(n;k)$, where $k<n$.
\end{theorem}

\begin{proof} Let $n=4m+1$. The Heffter array $H(9;5)$ is given in the Appendix. Otherwise, an $H(n;5)$ exists by Lemma \ref{n=4m+1k=5}. 
This was labeled $A$ in the proof of that lemma.
Observe that the occupied cells of $A$ are a subset of the union of diagonals $${\mathcal D}:=D_{n-3}\cup D_{n-2}\cup D_{n-1}\cup D_{0}\cup D_{1}\cup D_4\cup D_{2m-4}\cup D_{2m-2}\cup D_{2m-1}\cup D_{2m}\cup D_{2m+2}.$$

For each $m\geq 3$, the following set of cells is a subset of  ${\mathcal D}$, that does {\em not} intersect  $A$ and forms a Hamilton cycle $H$:
$$\begin{array}{l}
\{(i,2m+1+i),(i,2m+2+i)\mid 1\leq i\leq 2m-3\} 
\cup \{(2m-2,4m-1),(2m-2,4m+1)\} \\
\cup\{(2m-1,4m+1),(2m-1,4m),(4m,4m),(4m,2m+1)\} \\
\cup \{(i,i-2m+1),(i,i-2m+2)\mid 2m\leq i\leq 4m-1\} \cup \{(4m+1,1),(4m+1,2m+2)\}. \\
\end{array}$$
(See the array $H(17;5)$ above for an example.)  Thus there exists $4m+1-11=4(m-2)-2$ diagonals that do not intersect $A\cup H$ and so it is possible to construct  $2m-5$ disjoint Hamilton cycles by pairing empty diagonals that are either distance 1 or 2 apart.

For $m\geq 6$ a possible  pairing of diagonals is:
\begin{eqnarray*}
\{D_2,D_3\}; \quad  \{D_{2m-5},D_{2m-3}\}; \quad \{D_{2m+1},D_{2m+3}\};\\
\{D_{5+2i},D_{6+2i}\},\quad  0\leq i\leq m-6; \quad  \{D_{2m+4+2i},D_{2m+5+2i}\}, \quad  0\leq i\leq m-4.
\end{eqnarray*}

When $m=4$ we  pair the diagonals as $\{D_2,D_3\};\{D_9,D_{11}\};\{D_{12},D_{13}\}$ and when $m=5$ we  pair the diagonals as
 $\{D_2,D_3\};\{D_5,D_7\};\{D_{11},D_{13}\};\{D_{14},D_{15}\}; \{D_{16},D_{17}\}.$

Together with $H$ this gives a total of $2m-4$ Hamilton cycles. Thus applying Theorem \ref{crucial} recursively, we can form a Heffter array for each $k$ such that $k\equiv 1$ (mod $4$) and $k\leq n-4$.
\end{proof}

\section{Case E: $k\equiv 1$ (mod $4$) and $n\equiv 2$ (mod $4$)}

In this section we construct a Heffter array $H(n;k)$ where $n=4m+2$ and $k=4p+1$, where $k<n$.

\renewcommand{\tabcolsep}{2pt}
We demonstrate the following construction in the case $m=4$ with an example of an $H(18;5)$; the cycles $H$ and $K$ will be needed later for the case $k>5$. 

\begin{center}
{\small $
\renewcommand{\arraycolsep}{2pt}
\begin{array}{|c|c|c|c|c|c|c|c|c|c|c|c|c|c|c|c|c|c|}
\multicolumn{18}{c}{H(18;5)}\\
\hline
2 & & & & & 51 & 20 & $-38$ & $-35$ & & & & & & & & &  \\
\hline
$-40$ & 4 & & & & & 53 & 19 & $-36$ & & & & & & & & & \\
\hline
$-42$ & -28 & 6 & & & & & 37 & 27 & & & & & & & & & \\
\hline
39 & $-44$ & $-$29 & 8 & & & & & 26 & & & & & & & & &\\
\hline
41 & 25 & $-$46 & $-$30 & 10 & & & & & & & & & & & & & \\
\hline
& 43 & 24 & $-$48 & $-$31 & 12 & & & & & & & & & & & & \\
\hline
& & 45 & 23 & $-$50 & $-$32 & 14 & & & & & & & & & & & \\
\hline
& & & 47 & 22 & $-$52 & $-$33 & 16 & & & & & & & & & & \\
\hline
& & & & 49 & 21 & $-$54 & $-$34 & 18 & & & & & & & & & \\
\hline
& & & & & & & & & 1 & 81 & K & H & 69 & $-$60 & K & 90 & H \\
\hline
& & & & & & & & & H & 3 & 80 & K & H & 70 & $-$61 & K & 89 \\
\hline
& & & & & & & & & 88 & H & 5 & 79 & K & H & 71 & $-$62 & K \\
\hline
& & & & & & & & & 72 & 87 & H & 7 & 78 & K & H & K & $-$63 \\
\hline
& & & & & & & & & $-$55 & K & 86 & H & 9 & 77 & K & H & 64 \\
\hline
& & & & & & & & & K & $-$56 & K & 85 & H & 11 & 76 & 65 & H \\
\hline
& & & & & & & & & 75 & 66 & $-$57 & K & 84 & H & 13 & H & K \\
\hline
& & & & & & & & & K & H & 67 & $-$58 & K & 83 & H & 15 & 74 \\
\hline
& & & & & & & & & H & K & H & 68 & $-$59 & K & 82 & 73 & 17 \\
\hline
\end{array}$}
\end{center}

\begin{lemma}\label{n=4m+2k=5} For $n\equiv 2\ (\mbox{mod }4)$, $n\geq 6$ there exists a Heffter array $H(n;5)$.
\end{lemma}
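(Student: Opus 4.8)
The plan is to give an explicit construction analogous to Lemma \ref{n=4m+1k=5}, producing an $n\times n$ array $A$ with $n=4m+2$, five filled cells per row and column, support exactly $\{1,2,\dots,20m+10\}$, and every row and column summing to $2nk+1=40m+21$. As the worked example $H(18;5)$ suggests, the occupied cells should be confined to a small, controlled set of diagonals (together with the more elaborate block in the lower-right corner marked by the cycles $H$ and $K$), so that the leftover diagonals can later be paired into Hamilton cycles (or $2$-factors) for the step $k\to k+4$. First I would set up indexed families of entries, $A[\,\cdot\,,\cdot\,]=(\text{linear in }i)$ placed along diagonals $D_d$ for a fixed finite list of offsets $d$, exactly in the style of equations \eqref{k=5eq1}--\eqref{k=5eq12}, plus a fixed list of ``sporadic'' entries handling the boundary rows/columns where the regular pattern wraps around modulo $n$.

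The key steps, in order, are: (i) write down the general entry formulas and the sporadic cells; (ii) verify that the multiset of absolute values is precisely $\{1,\dots,20m+10\}$ by checking that each arithmetic progression contributed by a family covers a disjoint interval and that together they tile $\{1,\dots,20m+10\}$ (this is the bookkeeping mirrored in the ``support of $A$ contains $\dots$'' display of the previous lemma); (iii) verify the constant row sum $40m+21$ and column sum $40m+21$ for the generic rows/columns by summing the five linear-in-$i$ terms and checking the $i$-dependence cancels, as in the two boxed identities and equations \eqref{row}--\eqref{col}; and (iv) check the finitely many boundary rows and columns by hand. The parity split used in the previous proof (the $\pm$ pairs contributing $+1$ to each generic row and $-1$ to each generic column, forcing the constant total) will have an analogue here, since $n$ is even rather than odd.

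The main obstacle I expect is the parity/structural mismatch coming from $n\equiv 2\pmod 4$ being even. When $n$ is even, a pair of diagonals $D_i\cup D_j$ is a single Hamilton cycle only when $j-i$ is coprime to $n$; if $j-i$ is even it splits into two $n$-cycles. This is exactly why the example uses the two labelled structures $H$ and $K$ and why Lemma \ref{twofactor} (rather than Lemma \ref{hamilton}) is the tool reserved for Case E: the filler must be placed on $2$-factors that are unions of two $n$-cycles. So the delicate part is arranging the lower-right block so that the cells not used by $A$ decompose into such $2$-factors spanning disjoint diagonals, and ensuring $A$ itself occupies only diagonals whose complement admits the required pairing into either Hamilton cycles or pairs of $n$-cycles. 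I would handle the base case $n=6$ (and any other small $m$ where the generic formulas break down) by an explicit array, as the paper does elsewhere via the Appendix, and state the construction for $m\ge$ some threshold, deferring the $k>5$ extension to the subsequent theorem via Lemma \ref{twofactor}.
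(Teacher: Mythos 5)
There is a genuine gap, and it is fatal to the plan as stated. You propose a single global construction on the $n\times n$ array in which \emph{every} row and column sums to the integer $2nk+1=40m+21$, modelled on Lemma \ref{n=4m+1k=5}. For $n=4m+2$ and $k=5$ this is impossible by a parity count: summing all rows would give a total of $(4m+2)(40m+21)$, which is even, whereas the actual total of the entries is $\sum_{x=1}^{20m+10}\epsilon_x x$ with $\epsilon_x=\pm 1$, and this has the same parity as $\sum_{x=1}^{20m+10}x=(10m+5)(20m+11)$, which is odd. (The analogous check succeeds in Case D precisely because there $n$ is odd.) So no assignment of signs and cells can make all $4m+2$ row sums equal to $40m+21$ in $\mathbb{Z}$; the row sums must be allowed to take at least two distinct values in the coset of $0$ modulo $2nk+1$.

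This is exactly what the paper does, and it is also why the $H(18;5)$ example does not have the structure you describe. The array is not ``a set of global diagonals plus a special corner block'': it is the concatenation of two disjoint $(2m+1)\times(2m+1)$ blocks, an upper-left block $A_0$ whose rows and columns sum to $0$ and a lower-right block $A_1$ whose rows and columns sum to $2nk+1$ (total $(2m+1)\cdot 0+(2m+1)(2nk+1)$, which is odd, consistent with the parity count). The diagonals in each block are computed modulo $2m+1$, not modulo $n$, and the two off-diagonal quadrants are left entirely empty so that later they supply $m$ cycles of length $n$ each for the $k>5$ step. Your observations about $n$ even forcing $2$-factors made of two $n$-cycles, the role of Lemma \ref{twofactor}, and the need for an explicit $H(6;5)$ base case are all correct, but the central construction needs to be reorganised into this block form (with the attendant choice of which $2m+1$ rows and columns carry sum $0$ and which carry sum $2nk+1$, and a support split between the two blocks) before steps (ii)--(iv) of your plan can be carried out.
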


\begin{proof} Let $n=4m+2$. 
 The case $H(n;k)=H(6;5)$ is given in the Appendix and so henceforth we assume $m\geq 2$.
Our Heffter array will be the concatenation of an array $A_0$ in the first $2m+1$ rows and an array $A_1$ in the last $2m+1$ rows and columns. The row and column sums of $A_0$ will be $0$ and the row and column sums of $A_1$ 
 will be $2nk+1$. 

In our definition of $A_0$, rows and columns are calculated modulo $2m+1$ rather than $n$. 
We begin by defining a $(2m+1)\times (2m+1)$ array $A_0^\prime$ which has  support
$$\{2,4,\dots,4m+2\}\cup \{4m+3,4m+4,\dots ,12m+6\}$$ 
and for which each row sums to $0$ and each column sums to $0$ except for columns $1$ and $2m+1$, which sum
to $-(2m+1)$ and $2m+1$, respectively. 
Then we swap  the entry $-(8m+4)$ in cell $(2,1)$ with the entry $-(8m+8)$ in cell $(2,2m+1)$ and
swap the entry $6m+2$ in cell $(4,1)$ with the entry $8m+7$ in cell $(4,2m+1)$. The result will be an array $A_0$ defined on row and column set $\{1,2,\dots ,2m+1\}$ with row and column sums equal to $0$.

To this end, for $1\leq i\leq 2m+1$ let
%\begin{eqnarray*}
$$\begin{array}{c}
A_0^\prime[i,i]=2i; \quad A_0^\prime[3-i,2m+1-i]=4m+2+i; \quad A_0^\prime[2+i,1+i] =-(6m+3+i); \\
A_0^\prime[2+i,i-2]=8m+3+2i; \quad  A_0^\prime[i,i-2]=-(8m+4+2i).
\end{array}$$
%\end{eqnarray*}

Now let
%\begin{eqnarray*}
$$
A_0[r,c]=\left\{
\begin{array}{ll}
-(8m+8),&(r,c)=(2,1),\\
-(8m+4),&(r,c)=(2,2m+1),\\
8m+7,&(r,c)=(4,1),\\
6m+2,&(r,c)=(4,2m+1),\\
A_0^\prime[r,c],&\mbox{otherwise.}
\end{array}.\right.$$
%\end{eqnarray*}

The case $m=4$ is illustrated in the example $H(18;5)$ given above. 
It will be useful to note that the non-empty cells of this $(2m+1)\times (2m+1)$ array  $A_0$ are a subset of the union of diagonals
$$
{\cal D}_0:=D_0\cup D_1\cup D_2\cup D_3\cup D_4.
$$

Next, we define a $(2m+1)\times (2m+1)$ array $A_1$, on row and column set $\{2m+2,2m+3,\dots ,4m+2\}$ with support
$\{1,3,\dots,4m+1\}\cup \{12m+7,12m+8,\dots ,16m+8\}\cup \{kn-4m-1,kn-4m,\dots ,kn\}$.
For the case $k=5$, observe that $\{kn-4m-1,\dots ,kn\}=\{16m+9,\dots ,20m+10\}$. Thus when $k=5$ the support of $A_0\cup A_1$ is $\{1,\dots, 20m+10\}$ as required.  
Each row and column of $A_1$ will sum to $2nk+1$.
We first give $A_1$ for the cases $m=2$ and $m=3$ separately; then we present the general formula.

\begin{center}
{\small $
\renewcommand{\arraycolsep}{1pt}
\begin{array}{|c|c|c|c|c|}
\multicolumn{5}{c}{A_1\ (m=2,n=10)}\\
\hline
1 & 10k-4 & -31 & 10k-2 & 37 \\
\hline
39 & 3 & 10k & -34 & 10k-7 \\
\hline
-33 & 40 & 5 & 10k-8 & 10k-3 \\
\hline
10k-1 & 10k-6 & 36 & 7 & -35 \\
\hline
10k-5 & -32 & 10k-9 & 38 & 9 \\
\hline
\end{array}$
}
\end{center}

\begin{center}
{\small $
\renewcommand{\arraycolsep}{1pt}
\begin{array}{|c|c|c|c|c|c|c|}
\multicolumn{7}{c}{A_1\ (m=3,n=14)}\\
\hline
1 & 14k & & & -43 & 14k-7 & 50 \\
\hline
14k-1 & 3 & 51 & 14k-8 &  & -44 &  \\
\hline
& -45 & 5 & 14k-2 & 55 & & 14k-12 \\
\hline
& & 14k-3 & 7 & 14k-11 & 54 & -46 \\
\hline
56 & & -47 & & 9 & 14k-13 & 14k-4 \\
\hline
14k-6 & 53 &  & -48 & 14k-9 & 11 & \\
\hline
-49 & 14k-10 & 14k-5 & 52 &  & & 13 \\
\hline
\end{array}$
}
\end{center}

Otherwise $m\geq 4$ and we define $A_1$ as follows.
The rows and columns are defined modulo $2m+1$ rather than modulo $n$.
To construct the overall Heffter array, the array $A_1$ is then shifted by adding $2m+1$ (as an integer) to each row and column.

\begin{eqnarray}%\left \{
%\begin{array}{ll} 
A_1[4,1]= 14m+8; & A_1[5,2m+1]=14m+9;\label{k=5eq18}\\
A_1[6,2m]=16m+8; & A_1[2m-1,1]=kn-4m+1;\label{k=5eq19}\\
A_1[2m,2m+1]=kn-4m; & A_1[2m+1,2m]=kn-4m-1;\label{k=5eq20}\\ 
%\end{array}  \Bigg \} &\label{extras}\\
A_1[i,i]=2i-1, & 1\leq i\leq 2m+1;\label{k=5eq13} \\
A_1[i,2m-1+i]=kn+1-i, &1\leq i\leq 2m+1;\label{k=5eq14} \\
A_1[i,i+1]=kn-2m-i, & 1\leq i\leq 2m-2;\label{k=5eq15} \\
A_1[4+i,i]=-(12m+6+i), &1\leq i\leq 2m+1;\label{k=5eq16} \\
 A_1[6+i,1+i]=14m+9+i,&1\leq i\leq 2m-2.\label{k=5eq17}
\end{eqnarray}

We note that the support for $A_1$ is the union of the sets:

\begin{center}
\begin{tabular}{l}
 $\{1,3,5,\dots, 4m+1\}$ (by \eqref{k=5eq13}), $\{12m+7,\dots,14m+7\}$ (by \eqref{k=5eq16}), \\
$\{14m+8,14m+9\}$ (by \eqref{k=5eq18}),  
$\{14m+10,\dots, 16m+7\}$ (by \eqref{k=5eq17}), $\{16m+8\}$ (by \eqref{k=5eq19}), \\ $\{kn-4m-1,kn-4m,kn-4m+1\}$ (by \eqref{k=5eq19},\ \eqref{k=5eq20}), \\
$\{kn-4m+2,\dots, kn-2m-1\}$ (by \eqref{k=5eq15}) and $\{kn-2m,\dots,kn\}$ (by \eqref{k=5eq14}).
\end{tabular}
\end{center}

We next check the row and column sums. 
For row $r$ in the range $7$ to $2m+1$, \eqref{k=5eq14} and \eqref{k=5eq15} give a partial sum of 
$$(kn+1-r)+(kn-2m-r)=2kn-2m+1-2r,$$  
while \eqref{k=5eq16} and \eqref{k=5eq17} give a partial sum of $(-12m-6-(r-4))+(14m+9+(r-6))=2m+1.$ 
Now combined with \eqref{k=5eq13} the sum of these rows is
\begin{eqnarray*}
(2nk-2m+1-2r)+(2m+1)+(2r-1)=2nk+1,
\end{eqnarray*}
as required.

For column $c$ in the range $2$ to $2m-1$, \eqref{k=5eq14} and \eqref{k=5eq15} give a partial sum of $$(kn+1-(c+2))+(kn-2m-(c-1))=2kn-2m-2c,$$  while \eqref{k=5eq16} and \eqref{k=5eq17} give a partial sum of 
$(-12m-6-c)+(14m+9+(c-1))=2m+2.$ Now combined with \eqref{k=5eq13} the sum of these columns is
\begin{eqnarray*}
(2nk-2m-2c)+(2m+2)+(2c-1)=2nk+1.
\end{eqnarray*}
The sum of the remaining rows and columns can be calculated individually and overall the rows and columns of $A_1$ sum to $2nk+1$ as required.
Thus the concatenation of $A_0$ with $A_1$ gives an $H(4m+2;5)$.
\end{proof}

\begin{theorem}\label{n=4m+2k=4p+5} For $n\equiv 2\ (\mbox{mod }4)$, $n\geq 6$ and $k\equiv 1\ (\mbox{mod }4)$ there exists a Heffter array $H(n;k)$, where $k<n$.
\end{theorem}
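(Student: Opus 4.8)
The plan is to start from the base array $H(n;5)$ provided by Lemma~\ref{n=4m+2k=5} and to raise $k$ in steps of four, adjoining at each step two $2$-factors filled by Lemma~\ref{twofactor}. Write $n=4m+2$ and $k=4p+5$; since $k<n$ we have $1\le p\le m-1$. For the target value of $k$ I would build the base exactly as in Lemma~\ref{n=4m+2k=5}: the block $A_0$ on the first $2m+1$ rows and columns has all row and column sums $0$, while the block $A_1$ on the last $2m+1$ rows and columns is parameterised by $kn$ and has all row and column sums $2nk+1$. Its support is $\{1,\dots,16m+8\}\cup\{kn-4m-1,\dots,kn\}$, so the symbols still to be placed form the contiguous interval $\{16m+9,\dots,kn-4m-2\}$ of size $(k-5)n=4pn$.

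To fill this interval I would apply Lemma~\ref{twofactor} a total of $p$ times. Each application needs two disjoint $2$-factors, each a union of two $n$-cycles, and returns a shiftable array whose four entries in every row and column sum to $0$ and whose support is four length-$n$ bands $\{s+i,t+i,u+i,v+i\mid 1\le i\le n\}$. Because $n$ is even, a pair of diagonals at distance $2$ has $\gcd(2,n)=2$ and therefore splits into exactly two $n$-cycles, which is the natural supply of such $2$-factors. Following the pattern of Case~D, I would first reserve two $2$-factors $H$ and $K$ (the cycles marked on the displayed $H(18;5)$) inside the cells already occupied by the base diagonals but left empty by $A_0\cup A_1$, and then pair the remaining free diagonals into distance-$2$ pairs; a count shows this yields $2m-2$ mutually disjoint $2$-factors, enough for the $2p\le 2(m-1)$ required. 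Choosing the band heights $s,t,u,v$ in the successive applications so that, after interleaving, the $4p$ bands tile $\{16m+9,\dots,kn-4m-2\}$ exactly then completes the support to $\{1,\dots,kn\}$.

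Since each adjoined array has zero row and column sums and is disjoint from the base and from the other adjoined arrays, the whole array inherits row and column sums $0$ on the first $2m+1$ lines and $2nk+1$ on the last $2m+1$ lines, both divisible by $2nk+1$; every row and column now meets $5+4p=k$ filled cells; and the support is precisely $\{1,\dots,kn\}$. Hence the array is an $H(n;k)$, which is what we want.

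The main obstacle is the explicit, simultaneous construction of the $2p$ disjoint $2$-factors: each must decompose into two $n$-cycles, avoid every base cell in both blocks, be disjoint from the others, and present its first cells in a common row so that the hypotheses of Lemma~\ref{twofactor} hold. The band bookkeeping is delicate at the very bottom of the interval, because the inequalities $s>t+n>u+2n>v+3n$ force the four bands of one application to be mutually non-adjacent; for $p\ge 2$ this is arranged by an odd/even interleaving of band positions, whereas the first step $k=9$ ($p=1$) is the boundary case and is handled by filling the two reserved $2$-factors $H$ and $K$ directly. Finally I would settle the small orders (in particular $m=1$, where $H(6;5)$ is taken from the Appendix, and any $n$ for which the diagonal count above is too tight) by explicit arrays.
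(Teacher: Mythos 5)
Your overall strategy --- take the base $H(n;5)$ of Lemma \ref{n=4m+2k=5} and raise $k$ in steps of four by filling pairs of $2$-factors via Lemma \ref{twofactor} --- is exactly the paper's, but the construction of the $2$-factors, which is the heart of this step, has a genuine gap. First, $H$ and $K$ in the displayed $H(18;5)$ are not $2$-factors: each is a single cycle of length $n$ confined to the lower-right $(2m+1)\times(2m+1)$ block, so each covers only the last $2m+1$ rows and columns. Second, your proposed supply, ``pair the remaining free diagonals into distance-$2$ pairs,'' fails under either reading of ``diagonal.'' If you mean diagonals of the full $n\times n$ array, there are too few free ones: because the base is block-diagonal, each nonzero diagonal of a quadrant wraps around inside that quadrant and hence meets \emph{two} diagonals of the full array, so the cells of $A_0\cup A_1$ are spread over roughly $19$ full diagonals, leaving about $4m-17$ entirely free --- fewer than the $4m-4$ you would need. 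If you mean diagonals of a $(2m+1)\times(2m+1)$ quadrant, then a coprime pair of them is a Hamilton cycle of that quadrant, i.e.\ a single $n$-cycle covering only half the rows and columns, again not a $2$-factor.

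The missing idea is that each $2$-factor must be assembled from two $n$-cycles lying in \emph{complementary} quadrants, and that the two empty off-diagonal quadrants are indispensable to the count. The paper takes $m-2$ quadrant-Hamilton cycles in the upper-left block and $m-2$ in the lower-right block ($H$, $K$ and $m-4$ further diagonal pairs), pairing them across the two blocks to get $m-2$ $2$-factors; it then takes $m$ quadrant-Hamilton cycles in each of the (entirely empty) upper-right and lower-left blocks and pairs those to get $m$ more, for the required total of $2m-2$. Your construction never mentions the off-diagonal quadrants and so can yield at most $m-2$ genuine $2$-factors, short of the up to $2m-2$ needed; your asserted count of $2m-2$ does not follow from what you describe. (Your concern about the strict inequalities $s>t+n$, $t>u+n$, $u>v+n$ forcing gaps between the four bands of one application is real but minor --- the paper glosses over the same point, and the proof of Lemma \ref{twofactor} only needs the bands to be disjoint, so an interleaving or adjacent placement of bands works; your suggestion that the $p=1$ boundary case is ``handled by filling $H$ and $K$ directly'' does not address it.)
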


\begin{proof} 
Let $n=4m+2$ and $k=4p+1$. 
A $H(n;5)$ exists by Lemma \ref{n=4m+2k=5}.  
Otherwise, $k\geq 9$ and $m\geq 2$.  
We take the array $A=A_0\cup A_1$ from the previous lemma.

We will construct $m-2$ cycles of length $n$ (that is, on $2(2m+1)=n$ cells) in the upper left-hand ($A_0$) and lower right-hand ($A_1$) quadrants,
and $m$ further cycles of length $n$ in each of the remaining quadrants. Together these
form $2m-2$ disjoint $2$-factors.

From the proof of Lemma \ref{n=4m+2k=5}, within $A_0$ there are $2m+1-5=2(m-2)$ empty diagonals, which we take in pairs to obtain $m-2$ cycles of length $n$.
Next take the intersection of the last $2m+1$ rows and columns, this is the $(2m+1)\times (2m+1)$ subarray that contains $A_1$. We will refer to diagonals within that subarray only, recalling that the rows and columns are calculated  modulo $2m+1$.
We aim to find $m-2$ cycles of length $n$ from this subarray. The case $m=2$ is trivial and for the case $m=3$, observe that the empty cells of $A_1$ in the previous lemma form a cycle of length $14$. 
Otherwise $m\geq 4$ and the array $A_1$ occupies diagonals $D_0,D_1,D_2,D_3,D_4,D_5,D_7,D_{2m-2}$ and $D_{2m}$.

Next take the following cells
\begin{eqnarray*}
H&=&(\{(i+1,i),(2m-2+i,i)\mid 1\leq i\leq 2m+1\}\setminus\{(2m-1,1),(2m+1,2m)\})\\
&&\cup \{(2m-1,2m),(2m+1,1)\}\\
K&=&(\{(3+i,i),(7+i,i)\mid 1\leq i\leq 2m+1\}\setminus \{(4,1),(6,2m)\})\cup\{(4,2m),(6,1)\}.
\end{eqnarray*}
In the example $H(18;5)$ above these cycles are shown in cells marked by $H$ and $K$, respectively.
Observe that $H$ and $K$ form two  cycles  of length $4m+2$ disjoint from $A_1$ but  are a subset of
\begin{eqnarray*}
D_1\cup D_{2m-2}\cup D_{2m}\cup D_3\cup D_7\cup D_5.
\end{eqnarray*}

Thus there exists $2m+1-9=2(m-4)$ diagonals that do not intersect $A_1\cup H\cup K$.
For $m\geq 6$, we can thus form $m-4$ cycles of length $n$ by taking pairs of diagonals: 
$$\{D_6,D_8\}; \{D_{2m-3},D_{2m-1}\}; \{D_{9+2i},D_{10+2i}\},\quad  0\leq i\leq m-7,$$
and when $m=5$, $2m+1=11$ so we get one cycle of length $n$ by taking the diagonals $D_6$ and $D_9$.
Thus with $H$ and $K$ we have $m-2$ cycles of length $n$ that are disjoint from $A_1$ and each other; together these form 
$m-2$ $2$-factors, each consisting of two cycles of length $n$.

We can create a further
 $m$  cycles of length $n$ in each of the remaining quadrants, as these cells are all empty. Altogether we have $2m-2$ disjoint $2$-factors. Thus by Lemma \ref{twofactor},
we can fill $4(p-1)$ cells in each row and column with support
$\{16m+9,16m+10,\dots , kn-4m-2\}$ without changing the row and column sums, where $k=4p+1$.
Thus there exists an $H(4m+2;4p+1)$ Heffter array for each $m\geq 2$ and $p\leq m$. 
\end{proof}

\section{Appendix}

{\bf Case B:}
{\small
$$\begin{array}{ccc}
H(7;3)&&B(11)\\
$$\begin{array}{|c|c|c|c|c|c|c|}
\hline
15&	-13&	-2&&&&	\\
\hline							
-11&	14&		&-3&&&\\
\hline							
-4&		&-8&	12&&&\\
\hline							
  &-1&	10&	-9&&&	\\
\hline		
&&&&								5&	21&	17\\
\hline
&&&&								18&	6&	19\\
\hline
&&&&								20&	16&	7\\
\hline

\end{array}$$
&&
$$\begin{array}{|c|c|c|c|c|c|c|c|}%c|c|c|}
\hline
-1&	18&	-17&&&&&\\%&&&	\\
\hline							
24&	-2&		&-22&&&&\\%&&&\\
\hline							
-23&		&-3&	26&&&&\\%&&&\\
\hline							
  &-16&	20&	-4&&&&\\%&&&	\\
\hline						
&&&&				19&	-8&	-11& \\%&&&	\\
\hline			
&&&&				-9&	21&		&-12\\%&&&	\\
\hline		
&&&&				-10&		&25	&-15\\%&&&	\\
\hline		
&&&&				&	-13&	-14&	27\\%&&&	\\
\hline		
%&&&&&&&&								5&	kn&	kn-4\\
%\hline
%&&&&&&&&								kn-3&	6&	kn-2\\
%\hline
%&&&&&&&&								kn-1&	kn-5&	7\\
%\hline

\end{array}$$
\end{array}$$
}
{\footnotesize
$$\begin{array}{|c|c|c|c|c|c|c|c|c|c|c|c|}%c|c|c|}
\multicolumn{12}{c}{B(15)} \\ 
\hline
1&	-36&	35&&&&&&&&&\\%&&&\\
\hline												
-34&	-3&		&37&&&&&&&&\\%&&&\\
\hline											
33&		&	&-22&	-11&&&&&&&\\%&&&\\
\hline										
&	39&	-21&	&		&-18&&&&&&\\%&&&\\
\hline									
&		&-14&	&	&	-12&	26&&&&&\\%&&&\\
\hline								
&		&	&-15&	-17&	&		&32&&&&\\%&&&\\
\hline							
&		&	&	&28&	&	&	-19&	-9&&&\\%&&&\\
\hline						
&		&	&	&	&30&	-10&	&		&-20&&\\%&&&\\
\hline					
&		&	&	&	&	&-16&		&	&24&	-8&\\%&&&\\
\hline				
&		&	&	&	&	&	&-13&	38&		&	&-25\\%&&&\\
\hline			
&		&	&	&	&	&	&	&-29&		&31&	-2\\%&&&		\\
\hline
&		&	&	&	&	&	&	&	&-4&	-23&	27\\%&&&			\\
\hline
%&&&&&&&&&&&&								5&	kn&	kn-4\\\hline
%&&&&&&&&&&&&								kn-3&	6&	kn-2\\\hline
%&&&&&&&&&&&&								kn-1&	kn-5&	7\\\hline

\end{array}$$
}

\renewcommand{\arraycolsep}{1pt}

%\begin{landscape}
\noindent{\bf Case C:}

  % {\tiny \begin{eqnarray*}

\renewcommand{\arraycolsep}{3pt}
\bigskip

{\small        
$$\begin{array}{|c|c|c|c|c|c|}
\multicolumn{6}{c}{H(6;3)}\\
\hline
-1&	-16&	&	&	&	17\\
\hline
-11&   &-4	&	&	&15\\
\hline
12&		&-9	&-3	&	&\\
\hline
	&-2	&	&10	&-8	&\\
\hline
	&	&13	&-7	&-6	&\\
\hline
	&18	&	&	&14	&5\\
\hline
\end{array} \hspace{.5in}
\begin{array}{|c|c|c|c|c|c|c|}%c|c|c|}
\multicolumn{7}{c}{B(10)}\\
\hline
1&22&	-23&&&&\\%&&&\\
\hline							
17&	2&&-19&&&\\%&&&\\
\hline						
-18&&&15&3&&\\%&&&\\
\hline					
&-24&14&	&&10&\\%&&&\\
\hline				
&&9	&&&11&-20\\%&&&	\\
\hline		
&&&	4&-16&	&12\\%&&&		\\	
\hline
&&&&13&	-21&	8\\%&&&			\\
\hline
%&&&&&&&5&kn&kn-4\\\hline
%&&&&&&&kn-3&6&kn-2\\\hline
%&&&&&&&kn-1&	kn-5&7\\\hline
\end{array}$$

{\scriptsize
$$\begin{array}{|c|c|c|c|c|c|c|c|c|c|c|}%c|c|c|}
\multicolumn{11}{c}{B(14)}\\
\hline
-34&	-1&	35&&&&&&&&\\%&&&\\
\hline								
-2&	24&		&-22&&&&&&&\\%&&&\\
\hline							
36&	&	-32&	-4&&&&&&&\\%&&&\\
\hline							
&	-23	&-3&	26&&&&&&&\\%&&&\\
\hline							
& &	&			&-20&	28&	-8&&&&\\%&&&\\
\hline				
&	&	&		&30&	-9&	-21&&&&\\%&&&\\
\hline				
&   &   &		&-10&	-19&	29&&&&\\%&&&\\
\hline				
&	&	&		&	&	&	&-11&	27&	-16&\\%&&&\\
\hline	
&	&	&		&	&	&	&25&	-12&		&-13\\%&&&\\
\hline
&	&	&		&	&	&	&-14&	&	-17&	31\\%&&&\\
\hline
&	&	&		&	&	&	&	&-15	&33&	-18\\%&&&\\
\hline
%&&&&&&&&&&&5&kn&kn-4\\\hline
%&&&&&&&&&&&kn-3&6&kn-2\\\hline
%&&&&&&&&&&&kn-1&	kn-5&7\\\hline
\end{array}$$
}

{\tiny
 $$\begin{array}{|c|c|c|c|c|c|c|c|c|c|c|c|c|c|c|c|}%c|c|c|}
\multicolumn{14}{c}{ B(18)}\\
\hline
1	&21	&-22&&&&&&&&&&&&\\%&&&	\\	
\hline										
-36	&-4	&	&40&&&&&&&&&&&\\%&&&\\		
\hline									
35	&	&	&-12&	-23	&	&	&	&	&	&	&	&		&   &   \\%&&&\\
\hline
	&-17	&33	&	&	&-16&	&	&	&	&	&	&		&	&   \\%&&&\\ 
\hline
	&	&-11	&	&	&42	&-31&	&	&	&	&	&		&	&   \\%&&&\\  
\hline
	&	&	&-28	&41	&	&	&-13&	&	&	&	&		&	&   \\%&&&\\  
\hline
	&	&	&	&-18	&	&	&43	&-25&	&	&	&		&	&   \\%&&&\\  
\hline
	&	&	&	&	&-26	&45	&	&	&-19&	&	&		&	&   \\%&&&\\   
\hline
	&	&	&	&	&	&-14	&	&	&34	&-20&	&		&	&   \\%&&&\\   
\hline
	&	&	&	&	&	&	&-30	&-2	&	&	&32&		&	&    \\%&&&\\
\hline
	&	&	&	&	&	&	&	&27	&	&-24	&-3&		&	&    \\%&&&\\
\hline
	&	&	&	&	&	&	&	&	&-15	&44	&-29&	&	&	\\%&&&\\
\hline
	&	&	&	&	&	&	&	&	&	&	&	&-8	&46	&-38\\%&&&\\
\hline
	&	&	&	&	&	&	&	&	&	&	&	&-39	&-9	&48\\%&&&\\
\hline
	&	&	&	&	&	&	&	&	&	&	&	&47	&-37	&-10\\%&&&\\
\hline
%&&&&&&&&&&&&&&&5&kn&kn-4\\\hline
%&&&&&&&&&&&&&&&kn-3&6&kn-2\\\hline
%&&&&&&&&&&&&&&&kn-1&	kn-5&7\\\hline
\end{array}$$
}%\end{landscape}
%\begin{landscape}
{\tiny 
 $$\begin{array}{|c|c|c|c|c|c|c|c|c|c|c|c|c|c|c|c|c|c|c|}%c|c|c|}
\multicolumn{18}{c}{ B(22)}\\
\hline
    1&-36&35&  &    &   &   &   & & & & & & & & & & & \\%& & &\\
\hline																
  -30&-4&	&34&    &   &   &   & & & & & & & & & & & \\%& & &\\
\hline															
   29&	&	&-3&-26 &   &   &   & & & & & & & & & & & \\%& & &\\
\hline														
	&40 & -2&	&	&-38&   &   & & & & & & & & & & & \\%& & &\\
\hline													
	&	&-33&	&	&54& -21&   & & & & & & & & & & & \\%& & &\\
\hline												
	&	&	&-31& 48&	&	&-17& & & & & & & & & & & \\%& & &\\
\hline											
 	&	&	&	&-22&	&	&42 &-20&	&	&	&	&	&	&	&	&	&	\\%& & &\\
\hline
	&	&	&	&	&-16&53 &	&	&-37&	&	&	&	&	&	&	&	&	\\%& & &\\
\hline
	&	&	&	&	&	&-32&	&	&-15&47 &	&	&	&	&	&	&	&	\\%& & &\\
\hline
	&	&	&	&	&	&	&-25&39 &	&	&-14&	&	&	&	&	&	&   \\%& & &\\
\hline
	&	&	&	&	&	&	&	&-19&	&	&-27&46 &	&	&	&	&	&   \\%& & &\\
\hline
	&	&	&	&	&	&	&	&	&52	&-24&	&-28&	&	&	&	&	&   \\% & & & \\
\hline
	&	&	&	&	&	&	&	&	&	&-23&41	&-18&	&	&	&	&	&    \\%& & & \\
\hline
	&	&	&	&	&	&	&	&	&	&	&	&   &-11& 55& -44&&&\\%&&&\\
\hline
	&	&	&	&	&	&	&	&	&	&	&	&    &-45& -12&57&&&\\%&&&\\
\hline
	&	&	&	&	&	&	&	&	&	&	&	&    & 56& -43&-13&&&\\%&&&\\
\hline
	&	&	&	&	&	&	&	&	&	&	&	 &   &   &   &   &-8	&58	&-50\\%&&&\\
\hline
	&	&	&	&	&	&	&	&	&	&	&	&    &   &   &   &-51	&-9	&60\\%&&&\\
\hline
	&	&	&	&	&	&	&	&	&	&	&	&    &   &   &   &59	&-49	&-10\\%&&&\\
\hline
%&&&&&&&&&&&&&&&&&&&5&kn&kn-4\\\hline
%&&&&&&&&&&&&&&&&&&&kn-3&6&kn-2\\\hline
%&&&&&&&&&&&&&&&&&&&kn-1&	kn-5&7\\\hline
\end{array}$$
}

{\tiny
 $$\begin{array}{|c|c|c|c|c|c|c|c|c|c|c|c|c|c|c|c|c|c|c|c|c|c|c|c|}%c|c|c|}
\multicolumn{23}{c}{ B(26)}\\
\hline
-64	&1&63 &	&	&	&	&	&	&	&	&	&   &   & 	&	&	&	&   &   & 	&	&	\\%&&&\\
\hline										
	&65	&-16&-49&	&	&	&	&	&	&	&	&   &   & 	&	&	&	&   &   & 	&	&	\\%&&&\\
\hline										
-3	&	&	&53	&-50	&	&	&	&	&	&	&	&   &   & 	&	&	&	&   &   & 	&	&	\\%&&&\\
\hline									
	&-66	&	&-4	&	&70  &	&	&	&	&	&	&   &   & 	&	&	&	&   &   & 	&	&	\\%&&&\\
\hline										
	&	&-47&	&&-22	&69	&	&	&	&	&	&   &   & 	&	&	&	&   &   & 	&	&	\\%&&&\\
\hline									
	&	&	&	&68&-48 &	&-20&	&	&	&	&	&   &	&	&	&   &   &   &	&	&   \\%&&&\\
\hline										
	&	&	&	&-18	& &	&59	&-41&	&	&	&	&   &	&	&	&   &   &   &	&	&	\\%&&&\\
\hline										
	&	&	&	&	&	&-52&	&-10&62	&	&	&	&   &	&	&	&   &   &   &	&	&   \\%&&&\\
\hline									
	&	&	&	&	&	&-17&	&51	&-34&	&	&	&   &	&	&	&   &   &   &	&	&   \\%&&&\\
\hline										
67	&	&	&	&	&	&	&-39&	&-28&	&	&	&   &	&	&	&   &   &   &	&	&	\\%&&&\\
\hline										
	&	&	&	&	&	&	&	&	&	&-29&61 &-32&	&	&   &	&	&	&   &   &   &   \\%&&&\\
\hline								
	&	&	&	&	&	&	&	&	&	&-42&-30&72 &	&	&	&	&	&	&   &   &   &   \\%&&&\\
\hline						
	&	&	&	&	&	&	&	&	&	&71	&-31&-40&	&	&	&	&	&	&	&   &   & 	\\%&&&\\
\hline				
	&	&	&	&	&	&	&	&	&	&	&	&	&-2&60	&-58&	&	&	&	&   &   &   \\%&&&\\
\hline			
	&	&	&	&	&	&	&	&	&	&	&	&	&23&    &15	&-38&	&	&	&	&	&   \\%&&&\\
\hline	
	&	&	&	&	&	&	&	&	&	&	&	&	&-21&-33&	&	&54	&	&	&	&	&   \\%&&&\\
\hline
	&	&	&	&	&	&	&	&	&	&	&	&	&   &-27&	&	&-8	&35	&	&	&	&	\\%&&&\\
\hline
	&	&	&	&	&	&	&	&	&	&	&	&	&	&	&43	&14	&	&	&-57&	&	&	\\%&&&\\
\hline
	&	&	&	&	&	&	&	&	&	&	&	&	&	&	&	&24	&	&	&13	&-37&	&	\\%&&&\\
\hline
	&	&	&	&	&	&	&	&	&	&	&	&	&	&	&	&	&-46&-9	&	&	&55	&   \\%&&&\\
\hline
	&	&	&	&	&	&	&	&	&	&	&	&	&	&	&	&	&	&-26&	&	&-19& 45\\%&&&\\
\hline
	&	&	&	&	&	&	&	&	&	&	&	&	&	&	&	&	&	&	&44	&12	&	&-56\\%&&&\\
\hline
	&	&	&	&	&	&	&	&	&	&	&	&	&	&	&	&	&	&	&	&25 &-36& 11\\%&&&\\
\hline
%&&&&&&&&&&&&&&&&&&&&&&&5&kn&kn-4\\\hline
%&&&&&&&&&&&&&&&&&&&&&&&kn-3&6&kn-2\\\hline
%&&&&&&&&&&&&&&&&&&&&&&&kn-1&	kn-5&7\\\hline
\end{array}$$
}
%\end{landscape}
%\begin{landscape}

\renewcommand{\arraycolsep}{1pt}
{\tiny
$$\begin{array}{|c|c|c|c|c|c|c|c|c|c|c|c|c|c|c|c|c|c|c|c|c|c|c|c|c|c|c|c|c|c|c|}
\multicolumn{27}{c}{ H(30,3)}\\
\hline
1	&-62&61 &K	&	&	&	&	&	&	&	&	&	&	&	&	&   &	&	&	&	&	&	&	&	&	&	&K	& H & H\\\hline
-77	&K	&H	&-3 &80	&	&	&	&	&	&	&	&	&	&	&	&	&	&	&	&	&	&	&	&	&	&	&	&K	& H     \\\hline
	&83	&-12&H	&H	&-71&	&	&	&	&	&	&	&	&K	&	&	&	&	&	&	&	&	&	&	&	&	&	&	&K		\\\hline
K	&-21&K	&-60&H	&H	&81	&	&	&	&	&	&	&	&	&	&	&	&	&	&	&	&	&	&	&	&	&	&	&		\\\hline
	&H	&-49&K	&K	&75	&H	&-26&	&	&	&	&	&	&	&	&	&	&	&	&	&	&	&	&	&	&	&	&	&		\\\hline	
	&	&H	&63	&K	&-4	&K	&H	&-59&	&	&	&	&	&	&	&	&	&	&	&	&	&	&	&	&	&	&	&	&		\\\hline	
	&	&	&H	&-22&K	&-57&K	&79	&H	&	&	&	&	&	&	&	&	&	&	&	&	&	&	&	&	&	&	&	&		\\\hline
	&	&	&	&-58&H	&K	&K  &-20&78	&H	&	&	&	&	&	&	&	&	&	&	&	&	&	&	&	&	&	&	&		\\\hline
	&	&	&	&	&K	&-24&74	&K	&H	&-50&H	&	&	&	&	&	&	&	&	&	&	&	&	&	&	&	&	&	&		\\\hline
	&	&	&	&	&	&H	&-48&H	&K	&-34&82	&K	&	&	&	&	&	&	&	&	&	&	&	&	&	&	&	&	&		\\\hline
	&	&	&	&	&	&	&H	&K	&-38&H	&-35&73	&K	&	&	&	&	&	&	&	&	&	&	&	&	&	&	&	&		\\\hline
76	&	&	&	&	&	&	&	&H	&-40&K	&K	&-36&H	&	&	&	&	&	&	&	&	&	&	&	&	&	&	&	&		\\\hline
	&	&	&	&	&	&	&	&	&K  &84	&-47&-37&H	&H	&K	&	&	&	&	&	&	&	&	&	&	&	&	&	&		\\\hline
	&	&	&	&	&	&	&	&	&	&K  & H	&H  &2  &70 &-72&K	&	&	&	&	&	&	&	&	&	&	&	&	&		\\\hline
	&	&	&	&	&	&	&	&	&	&	& K &H  &-27&-19&H	&46	&K	&	&	&	&	&	&	&	&	&	&	&	&		\\\hline
	&	&	&	&	&	&	&	&	&	&	&   &K  &25 &H  &39	&H  &-64&K	&	&	&	&	&	&	&	&	&	&	&		\\\hline
	&	&	&	&	&	&	&	&	&	&	&	&	&K	&-51&H	&-18&H  &69 &K	&	&	&	&	&	&	&	&	&	&		\\\hline		
	&	&	&	&	&	&	&	&	&	&	&	&	&	&K	&33	&H  &8	&H  &-41&K	&	&	&	&	&	&	&	&	&		\\\hline			
	&	&	&	&	&	&	&	&	&	&	&	&	&	&	&K	&-28&H	&-17&H	&45	&K	&	&	&	&	&	&	&	&		\\\hline		
	&	&	&	&	&	&	&	&	&	&	&	&	&	&	&	&K	&56	&H	&9	&H	&-65&K	&	&	&	&	&	&	&		\\\hline		
	&	&	&	&	&	&	&	&	&	&	&	&	&	&	&	&	&K	&-52&H	&-16&H	&68	&K	&	&	&	&	&	&		\\\hline	
	&	&	&	&	&	&	&	&	&	&	&	&	&	&	&	&	&	&K	&32	&H  &10 &H	&-42&K	&	&	&	&	&		\\\hline	
	&	&	&	&	&	&	&	&	&	&	&	&	&	&	&	&	&	&	&K	&-29&H	&-15&H	&44	&K	&	&	&	&		\\\hline	
	&	&	&	&	&	&	&	&	&	&	&	&	&	&	&	&	&	&	&	&K	&55	&H  &11	&H  &-66&K	&	&	&	   \\\hline
	&	&	&	&	&	&	&	&	&	&	&	&	&	&	&	&	&	&	&	&	&K	&-53&H	&-14&H	&67	&K	&   &   \\\hline	
	&	&	&	&	&	&	&	&	&	&	&	&	&	&	&	&	&	&	&	&	&	&K	&31	&H	&23 &-54&H	&K	&   \\\hline
	&	&	&	&	&	&	&	&	&	&	&	&	&	&	&	&	&	&	&	&	&	&	&K	&-30&43	&-13&H	&H	&K   \\\hline
K&&&&&&&&&&&&&&&&&&&&&&&&K&H&H&5&kn&kn-4\\
\hline
H&K&&&&&&&&&&&&&&&&&&&&&&&&K&H&kn-3&6&kn-2\\
\hline
H&H&K&&&&&&&&&&&&&&&&&&&&&&&&K&kn-1&	kn-5&7\\
\hline
\end{array}$$}
%\end{landscape}

\renewcommand{\arraycolsep}{3pt}
{\bf Cases D and E:}
{\small 
$$\begin{array}{|c|c|c|c|c|c|c|c|c|}
\multicolumn{9}{c}{H(9;5)}\\
\hline
45 & 36 & 20 & & & & & -18 & 8 \\
\hline
-16 & 24 & 43 & 34 & & & & & 6 \\
\hline
& 44 & 35 & 22 & 7 & & -17 & & \\
\hline
& & 5 & 42 & -15 & 33 & & 26 & \\
\hline
9 & & & -10 & 32 & 41 & 19 & & \\
 \hline
 & 1 & & & 40 & -2 & 21 & 31 & \\
\hline
& & -12 & & & 23 & 30 & 39 & 11 \\
\hline
25 & & & 3 & & -4 & 38 & & 29 \\
\hline
28 & -14 & & & 27 & & & 13 & 37 \\
\hline
\end{array}
\qquad
\begin{array}{|c|c|c|c|c|c|}
\multicolumn{6}{c}{H(6;5)} \\
\hline
1 & 2 & 3 & & -25 & 19 \\
\hline
5 & 6 & 16 & 4 & & 30 \\
\hline
23 & 7 & 9 & 8 & 14 & \\
\hline
11 & & 15 & 12 & 10 & 13 \\
\hline
& 24 & 18 & 17 & 29 & -27 \\
\hline
21 & 22 & & 20 & -28 & 26 \\
\hline
\end{array}
$$
}

\bigskip

{\footnotesize
$$\begin{array}{|c|c|c|c|c|c|c|c|c|c|c|c|c|}
\multicolumn{13}{c}{H(13;9)} \\
\hline
65 & -21 & H & 9 & & 38 &  & K & K & & & H & 40 \\
\hline
52 & 64 & -22 & H & 10 & & 27 & & K & K & & &  H  \\
\hline
H & 51 & 63 & -23 & H & 10 & & 27 & & K & K & 29 & \\
\hline
& & 50 & 62 & -24 & H & H & & 31 & & K & 12 & K \\
\hline
& H & H & 49 & 61 & -25 & & 13 & & 33 & & K & K \\
\hline
K & K & & H & 48 & 60 & H & -26 & 14 & & 35 & & \\
\hline
-16 & K & K & & & 47 & 46 & H & H & 17 & & 37 & \\
\hline
& & K & K & & H & 59 & 45 & -15 & H & 3 & & 39 \\
\hline
28 & H & & K & K & & 19 & 58 & 44 & -18 & H & &  \\
\hline
& 30 & & & K & K & & H & 57 & 43 & -4 & H & 5 \\
\hline
2 & & 32 & & & K & K & & H & 56 & 42 & -1 & H  \\
\hline
& 7  & & 34 & H & & K & 41 & & H & 55 & K & -6 \\
\hline
H & & 8 & & 36 & & -20 & K & & & H & 54 & 53 \\
\hline
\end{array}$$
}

\end{document}